\numberwithin{theorem}{section}
\numberwithin{lemma}{section}
\numberwithin{corollary}{section}
\spnewtheorem{claim2}{Claim}{\bfseries}{\itshape}  
\numberwithin{claim2}{section}
\spnewtheorem{fact}{Fact}{\bfseries}{\itshape}  
\numberwithin{fact}{section}
\algnewcommand\LeftComment[1]{%
$\triangleright$ \eqparbox{COMMENT}{#1} \hfill %
}
\newcommand{\CD}{\mathrm{CD}}
\newcommand{\NP}{\textsf{NP}}
\newcommand{\defi}[1]{\emph{\color{red!50!black}#1}}
\def\shrinkage{-2.4mu}
\def\vecsign#1{\rule[1.38\LMex]{\dimexpr#1-6.35pt}{0.45\LMpt}\kern-3.0\LMpt\mathchar"017E}
\def\dvecsign#1{\rule{0pt}{8\LMpt}\smash{\stackon[-2.15\LMpt]{\SavedStyle\mkern-\shrinkage\vecsign{#1}}%
  {\rotatebox{180}{$\SavedStyle\mkern-\shrinkage\vecsign{#1}$}}}}
\def\dvec#1{\ThisStyle{\setbox0=\hbox{$\SavedStyle#1$}%
  \def\useanchorwidth{T}\stackon[-4.7\LMpt]{\SavedStyle#1}{\dvecsign{\wd0}}}}
\def\cevsign#1{\rule[1.38\LMex]{\dimexpr#1-6.3pt}{0.45\LMpt}\kern-3.0\LMpt\mathchar"017E}
\def\lvecsign#1{\rule{0pt}{8\LMpt}\smash{{\reflectbox{$\SavedStyle\mkern-\shrinkage\cevsign{#1}$}}}}
\def\lvec#1{\ThisStyle{\setbox0=\hbox{$\SavedStyle#1$}%
  \def\useanchorwidth{T}\stackon[-4.5\LMpt]{\SavedStyle#1}{\,\lvecsign{\wd0}}}}
\def\lveclow#1{\ThisStyle{\setbox0=\hbox{$\SavedStyle#1$}%
  \def\useanchorwidth{T}\stackon[-6\LMpt]{\SavedStyle#1}{\,\lvecsign{\wd0}}}}
\def\oldvec{\mathaccent"017E}
\newcommand{\rarrow}[1]{\oldvec{#1}}
\newcommand{\larrow}[1]{\lvec{#1}}
\newcommand{\larrowlow}[1]{\lveclow{#1}}
\newcommand{\lrarrow}[1]{\dvec{#1}}
\def\dichi{\dvec{\chi}}
\def\diclique{\dvec{\omega}}
\def\clique{\omega}
\begin{document}

\title{A study on token digraphs}

\author{
  Cristina G.\ Fernandes\inst{1}\orcidID{0000-0002-5259-2859}
  \and
  Carla N.\ Lintzmayer\inst{2}\orcidID{0000-0003-0602-6298} 
  \and
  Juan~P.~Peña\inst{3}
  \and
  Giovanne Santos\inst{3}
  \and 
  Ana Trujillo-Negrete\inst{4}\orcidID{0000-0002-1138-1190} 
  \and 
  Jose Zamora\inst{5}
}

\authorrunning{Fernandes, Lintzmayer, Peña, Santos, Trujillo, Zamora}

\institute{
  Departamento de Ciência da Computação.
  Universidade de São Paulo.
  Brazil\\
  \email{\href{mailto:cris@ime.usp.br}{cris@ime.usp.br}}
  \and
  Centro de Matemática, Computação e Cognição.
  Universidade Federal do ABC.
  Brazil\\
  \email{\href{mailto:carla.negri@ufabc.edu.br}{carla.negri@ufabc.edu.br}}
  \and
  Departamento de Ingeniería Matemática.
  Universidad de Chile.
  Chile\\
  \email{\href{mailto:juanpabloalcayaga.p21@gmail.com}{juanpabloalcayaga.p21@gmail.com}},
  \email{\href{mailto:gsantos@dim.uchile.cl}{gsantos@dim.uchile.cl}}
  \and
  Center for Mathematical Modeling.
  University of Chile.
  Chile\\
  \email{\href{mailto:ltrujillo@dim.uchile.cl}{ltrujillo@dim.uchile.cl}}
  \and
  Departamento de Matemáticas.
  Universidad Andres Bello.
  Chile\\
  \email{\href{mailto:josezamora@unab.cl}{josezamora@unab.cl}}
}

\maketitle

\begin{abstract}
  For a digraph~$D$ of order~$n$ and an integer $1 \leq k \leq n-1$, the
  \defi{$k$-token digraph} of~$D$ is the graph whose vertices are all
  $k$-subsets of vertices of~$D$ and, given two such $k$-subsets~$A$ and~$B$,
  $(A,B)$ is an arc in the $k$-token digraph whenever $\{a\} = A \setminus B$,
  $\{b\} = B \setminus A$, and there is an arc $(a,b)$ in~$D$.
  Token digraphs are a generalization of token graphs.
  In this paper, we study some properties of token digraphs, including strong
  and unilateral connectivity, kernels, girth, circumference and Eulerianity.
  We also extend some known results on the clique and chromatic numbers of
  $k$-token graphs, addressing the bidirected clique number and dichromatic
  number of $k$-token digraphs.
  Additionally, we prove that determining whether $2$-token digraphs have a
  kernel is \NP-complete. 
\end{abstract}

\keywords{Token graphs \and Token digraphs \and Kernels \and Strong
connectivity \and Unilateral connectivity}

\section{Introduction}

Let~$G$ be a simple graph of order $n \geq 2$ and let~$k$ be an integer, with
$1 \leq k \leq n-1$.
The \defi{$k$-token graph of~$G$}, denoted by~$F_k(G)$, is the graph whose
vertices are all the $k$-subsets of~$V(G)$, two of which are adjacent if their
symmetric difference is a pair of adjacent vertices in~$G$. 

The name ``token graph'' is motivated by the following 
interpretation~\cite{FabilaMonroyFHHUW2012}.
Consider a graph~$G$ and a fixed number~$k$ of indistinguishable tokens, with
$1 \leq k \leq n-1$.
A \defi{$k$-token configuration} corresponds to the~$k$ tokens placed on
distinct vertices of~$G$, which corresponds to a subset of~$k$ vertices of~$G$. 
Tokens can slide from their current vertex to an unoccupied adjacent vertex
and, at each step, exactly one token can slide.
Construct a graph whose vertices are the $k$-token configurations, and make two
such configurations adjacent whenever one configuration can be reached from the
other by sliding a token from one vertex to an adjacent vertex.
This new graph is isomorphic to~$F_k(G)$.
See an example in Figure~\ref{fig:example_tokengraph}.

\begin{figure}[h!]
    \centering
    \resizebox{0.6\textwidth}{!}{\includegraphics{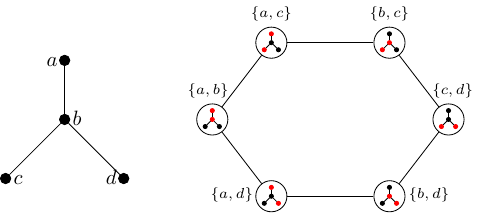}}
    \caption{A graph~$G$ and its $2$-token graph~$F_2(G)$.}
    \label{fig:example_tokengraph}
\end{figure}

The $k$-token graphs are also called the \defi{symmetric $k$th power} of
graphs~\cite{AudenaertGRR2007}.
In fact, token graphs have been defined, independently, at least four times
since~1988 (see~\cite{double_vertex,FabilaMonroyFHHUW2012,Johns,Rudolph2002}). 
They have been used to study the Isomorphism Problem in graphs by means of
their spectra, and, so far, some applications of token graphs to Physics and
Coding Theory are known~\cite{hamiltonian-cycles,EtzionB1996,packing,Rudolph2002}.
Also, token graphs are related with other well-known graphs, 
such as the Johnson graphs and the doubled Johnson graphs.
See, e.g.,~\cite{AlaviLL2002,EtzionB1996}. 

In 2012, Fabila-Monroy \textit{et al.}~\cite{FabilaMonroyFHHUW2012}
reintroduced $k$-token graphs and proved tight lower and upper bounds on their
diameter, connectivity, an chromatic number.
They also characterized the cliques in token graphs in terms of the cliques in
the original graph and established sufficient conditions for the existence or
non-existence of a Hamiltonian path in various token graphs.
They showed that if~$F_k(G)$ is bipartite for some $k \geq 1$, then~$F_\ell(G)$
is bipartite for all $\ell \geq 1$.
Carballosa \textit{et al.}~\cite{CarballosaFLR2017} then characterized, for
each value of~$k$, which graphs have a regular $k$-token graph and which
connected graphs have a planar $k$-token graph.
Also, de Alba \textit{et al.}~\cite{deAlbaCR2020} presented a tight lower bound
for the matching number of~$F_k(G)$ for the case in which~$G$ has either a
perfect matching or an almost perfect matching, estimated the independence
number for bipartite $k$-token graphs, and determined the exact value for some
graphs.
Adame \textit{et al.}~\cite{AdameRT2021} provided an infinite family of graphs,
containing Hamiltonian and non-Hamiltonian graphs, for which their $k$-token
graphs are Hamiltonian.

In this paper, we consider a generalization of token graphs to digraphs as
follows.
Let $D$ be a digraph of order~$n$ and let~$k$ be an integer with $1 \leq k \leq
n-1$.
The \defi{$k$-token digraph of~$D$}, denoted by~$F_k(D)$, is the digraph whose
vertices are all the $k$-subsets of~$V(D)$ and, given two such $k$-subsets~$A$
and~$B$, $(A,B)$ is an arc in~$F_k(D)$ whenever $\{a\} = A \setminus B$, 
$\{b\} = B \setminus A$, and there is an arc $(a,b)$ in~$D$.
See Figure~\ref{fig:example_tokendigraph} for an example.

\begin{figure}[h!]
    \centering
    \resizebox{0.6\textwidth}{!}{\includegraphics{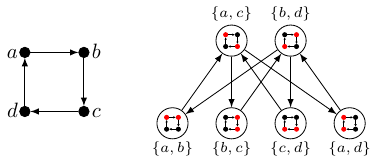}}
    \caption{A digraph $D$ and its $2$-token digraph $F_2(D)$.}
    \label{fig:example_tokendigraph}
\end{figure}

As far as we know, the only generalization of token graphs to digraphs prior to our work was
proposed by Gao and Shao~\cite{GaoS2009} in 2009 for $k=2$ tokens, but with the
following difference: they considered 2-tuples of vertices instead of
2-subsets. 
This corresponds to distinct tokens. 
In the final stage of our work, we became aware of a master's thesis by 
Fernández-Velázquez~\cite{Fernandez2022} that independently introduces the concept of token
digraphs as proposed here. 
In~\cite{Fernandez2022}, the author studies some invariants of token digraphs,
including strong connectivity, diregularity, diameter and the existence of 
oriented cycles, and also explores the game of cops and robbers on token digraphs of
 certain families of graphs, such as cycles, trees and Cartesian products.  

Section~\ref{sec:basics} presents some notation and states some basic
properties we will use throughout the paper. 
In Section~\ref{sec:strong_connectivity}, we present results regarding strong
connectivity of token digraphs while in Section~\ref{sec:kernels} we present
some results regarding kernels of token digraphs.
Section~\ref{sec:uni} considers unilateral digraphs, and characterizes when
their token digraphs also are unilateral. 
In Section~\ref{sec:girth}, we analyze relation between the girth and
circumference of a digraph and their value for its token digraphs and, in
Section~\ref{sec:eulerian}, we study whether the fact that a digraph is
Eulerian or Hamiltonian implies that its token digraphs are also Eulerian or
Hamiltonian, respectively. 
Section~\ref{sec:clique} determines some relations between the bidirected
clique number of a digraph and the bidirected clique number of its token
digraphs. 
Finally, Section~\ref{sec:acyclic_partition} studies acyclic partitions, and
the dichromatic number of a digraph and its token digraphs. 
We conclude with some final remarks in Section~\ref{sec:final}.

\section{General definitions and basic properties}
\label{sec:basics}

We denote the complete graph on~$n$ vertices by~$K_n$, the wheel graph on~$n+1$
vertices by~$W_n$, and the cycle graph on~$n$ vertices by~$C_n$. 

For a digraph~$D$, we denote by~$V(D)$ its vertex set and by~$A(D)$ its arc
set.
For an arc $(x,y) \in A(D)$, we say~$x$ is an \defi{in-neighbor} of~$y$
while~$y$ is an \defi{out-neighbor} of~$x$.
The \defi{in-degree} of a vertex~$v$ is the number of in-neighbors of~$v$,
denoted by $d^-(v)$, and the \defi{out-degree} of~$v$ is the number of
out-neighbors of~$v$, denoted by $d^+(v)$. 

An \defi{oriented path} of~$D$ is a sequence $(v_1,v_2,\ldots,v_k)$ of vertices
of~$D$ such that $v_i \neq v_j$ for all $i \neq j$ and $(v_i,v_{i+1}) \in A(D)$
for all $1 \leq i < k$.
An \defi{$xy$-path} in~$D$ is an oriented path from vertex~$x$ to vertex~$y$.
An \defi{oriented cycle} is a sequence $(v_1,v_2,\ldots,v_k,v_{k+1})$ of
vertices of~$D$ such that $(v_1,v_2,\ldots,v_k)$ is an oriented path,
$(v_k,v_1) \in A(D)$, and $v_{k+1} = v_1$.
The \defi{length} of an oriented path or cycle is the number of edges it
contains.
A \defi{$k$-cycle} or a \defi{$k$-path} is a cycle or a path of length~$k$.
A \defi{digon} is an oriented 2-cycle of~$D$.
We say a cycle or a path is odd (resp.\ even) if its length is odd (resp.\
even).

For a graph~$G$, we denote by $\lrarrow{G}$ the digraph obtained from~$G$ by
replacing each edge of~$G$ by a digon.  
For a digraph~$D$, the \defi{reverse} of~$D$, denoted as~$\larrow{D}$, is the
digraph obtained from~$D$ by reversing every arc of~$D$. 

A digraph is \defi{acyclic} if it has no oriented cycle. 
An acyclic digraph is referred to as a \defi{dag} (acronym for directed acyclic
graph).
A vertex of a dag~$D$ with out-degree zero is called a \defi{sink}.
It is known that every dag has at least one sink.

A \defi{$k$-token configuration} is a $k$-subset of vertices of~$D$, and the
vertices of the token digraph~$F_k(D)$ are $k$-token configurations of~$D$. 
In several of our proofs, we will use the interpretation of an arc~$XY$
of~$F_k(D)$ existing when one can slide or move one token from~$X$ to~$Y$.
Also, in order to avoid confusion, we will use \emph{nodes} to refer to
vertices of token digraphs.


It is easy to see that, for every graph~$G$ on~$n$ vertices, the token
graph~$F_k(G)$ is isomorphic to the token graph~$F_{n-k}(G)$. 
In particular, if~$n$ is even, this gives a non-trivial automorphism
on~$F_{n/2}(G)$. 
The following properties hold.

\begin{property}\label{property1}
For a digragh $D$ on $n$ vertices, the $k$-token digraph $F_k(D)$ is isomorphic
to the $(n-k)$-token digraph $F_{n-k}(\larrow{D})$.
\end{property}

\begin{property}\label{property2}
For a digragh $D$, the $k$-token digraphs $F_k(D)$ and $\larrowlow{F_k(\larrow{D})}$
are isomorphic. 
\end{property}

\begin{property}\label{property3}
For a graph $G$, the $k$-token digraphs $F_k(\lrarrow{G})$ and
$\lrarrow{F_k(G)}$ are isomorphic. 
\end{property}

\section{Strong connectivity aspects of token digraphs}
\label{sec:strong_connectivity}

Let~$D$ be a digraph.
Note that~$D$ is weakly connected if and only if~$F_k(D)$ is weakly connected.
This comes from the fact that the token graph of a graph~$G$ is connected if
and only if~$G$ is connected~\cite[Theorems~5 and~6]{FabilaMonroyFHHUW2012}. 
We want to strengthen this statement somehow to strong connectivity.  

A digraph~$D$ is \defi{strongly connected} if, for every two vertices~$x$
and~$y$ of~$D$, there is an $xy$-path and a $yx$-path.
A digraph that is not strongly connected consists of some \defi{strongly
connected components}, which are the maximal strongly connected subdigraphs
of~$D$.
Thus, to understand the general situation, it is important to consider what
happens in strongly connected digraphs. 

\begin{lemma}
\label{lem:strong_directedpath}
  Let~$D$ be a strongly connected digraph, and let~$A$ and~$B$ be two nodes
  of~$F_k(D)$.
  There is an $AB$-path in~$F_k(D)$. 
\end{lemma}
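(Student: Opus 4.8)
The plan is to prove the slightly stronger statement that there is a directed \emph{walk} from $A$ to $B$ in $F_k(D)$; since $F_k(D)$ is finite, such a walk always contains an $AB$-path. Write $A\setminus B=\{a_1,\dots,a_d\}$ and $B\setminus A=\{b_1,\dots,b_d\}$ (both have the same size, since $|A|=|B|$), set $C^0=A$, and let $C^i=(C^{i-1}\setminus\{a_i\})\cup\{b_i\}$. A direct check shows that each $C^i$ is a $k$-subset, that $a_i\in C^{i-1}$, that $b_i\notin C^{i-1}$, and that $C^d=B$. Hence it suffices to produce, for each $i$, a directed walk from $C^{i-1}$ to $C^i$ and then concatenate. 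So the whole lemma reduces to the following claim $(\ast)$: for every node $C$ of $F_k(D)$, every $a\in C$, and every $b\in V(D)\setminus C$, there is a directed walk in $F_k(D)$ from $C$ to $(C\setminus\{a\})\cup\{b\}$. In the token interpretation, $(\ast)$ says that a single token can be routed from $a$ to the empty vertex $b$ while the remaining tokens are brought back, as a set, to their starting vertices.

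To prove $(\ast)$ I would invoke strong connectivity of $D$ to fix an $ab$-path $P=(a=p_0,p_1,\dots,p_\ell=b)$ in $D$, and argue by induction on its length $\ell$ (the statement $(\ast)$ being quantified over all choices of $C$, $a$, $b$). If none of $p_1,\dots,p_{\ell-1}$ lies in $C$ (in particular whenever $\ell=1$), then sliding a token along $P$, that is, performing the moves $p_0\to p_1\to\cdots\to p_\ell$ one at a time, is a valid walk in $F_k(D)$: when the token is about to enter $p_{t+1}$ the current configuration is $(C\setminus\{a\})\cup\{p_t\}$, and $p_{t+1}$ is empty because $p_{t+1}\notin C$ and $p_{t+1}\neq p_t$. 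This walk ends exactly at $(C\setminus\{a\})\cup\{b\}$. Otherwise, let $j$ be the largest index with $p_j\in C$; then $1\le j\le \ell-1$ and $p_{j+1},\dots,p_\ell\notin C$. First slide the token on $p_j$ along the tail $p_j\to p_{j+1}\to\cdots\to p_\ell=b$, which is valid for the same reason, reaching the configuration $C_1=(C\setminus\{p_j\})\cup\{b\}$. Now $a\in C_1$, $p_j\notin C_1$, and $(p_0,\dots,p_j)$ is an $ap_j$-path in $D$ of length $j<\ell$, so the induction hypothesis applied to $C_1$, $a$, $p_j$ yields a directed walk from $C_1$ to $(C_1\setminus\{a\})\cup\{p_j\}=(C\setminus\{a\})\cup\{b\}$. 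Concatenating the two walks proves $(\ast)$, and hence the lemma.

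I expect the only delicate part to be the bookkeeping: verifying at each slide that the target vertex is unoccupied, and verifying that the recursive invocation of $(\ast)$ really satisfies its hypotheses (that $a$ is still occupied in $C_1$, and that the new target $p_j$ is unoccupied in $C_1$). Termination is immediate because the path length strictly decreases in the recursion, and the reduction to $(\ast)$ is purely combinatorial; so I do not anticipate any further obstacle, and the role of strong connectivity is confined to the single step of choosing the path $P$.
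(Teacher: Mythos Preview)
Your proof is correct and follows essentially the same approach as the paper: both reduce to moving one token at a time from a vertex of $A\setminus B$ to a vertex of $B\setminus A$, and both handle tokens blocking the chosen path by shifting them forward along it (the paper enumerates the blockers $u_1,\dots,u_t$ explicitly and shifts them in reverse order, whereas your recursion on the path length in $(\ast)$ unrolls to exactly the same sequence of moves). The only cosmetic difference is that the paper picks a \emph{shortest} path from $A\setminus B$ to $B\setminus A$ so that the blockers automatically lie in $A\cap B$, while your formulation of $(\ast)$ makes this unnecessary.
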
  
\begin{proof}
  Recall that~$A$ and~$B$ are $k$-subsets of $V(D)$. 
  The proof is by induction on $|A \setminus B|$. 
  Clearly if $|A \setminus B|=0$, then $A=B$ and there is nothing to prove. 
  Suppose $|A \setminus B| > 0$. 
  Observe that this implies $|A \cap B| \leq k-1$ and, consequently, $|B
  \setminus A| > 0$, since $|B| = k$.
  Let~$P$ be a shortest oriented path in~$D$ from a vertex in $A \setminus B$
  to a vertex in~$B \setminus A$.
  Call~$a$ the starting vertex of~$P$ and~$b$ the ending vertex of~$P$. 
  The goal is to move the token from~$A$ in~$a$ to~$b$ going through the
  vertices in~$P$.
  However there might be tokens of~$A$ and~$B$ on the way. 
  Let $u_1,\ldots,u_t$ be the vertices on~$P$, in order, that are in $A \cap
  B$.
  Let $u_0=a$ and $u_{t+1}=b$. 
  For $i=t,t-1,\ldots,0$, we move the token from~$A$ in~$u_i$ to~$u_{i+1}$
  through the vertices of~$P$. 
  The resulting $k$-token configuration is $A' = A \setminus \{a\} \cup \{b\}$,
  and the previous process describes an oriented path from~$A$ to~$A'$
  in~$F_k(D)$. 
  Now, as $|A' \setminus B| = |A \setminus B| - 1$, by induction, there is an
  oriented path in~$F_k(D)$ from~$A'$ to~$B$.
  These two paths together contain a path from~$A$ to~$B$ in~$F_k(D)$. 
\hfill$\qed$
\end{proof}  

The previous result alone implies that, if~$D$ is strongly connected,
then~$F_k(D)$ is also strongly connected.
We now address digraphs that are not necessarily strongly connected. 

Let $C_1,\ldots,C_t$ be the strongly connected components of~$D$. 
The \defi{condensation digraph} of~$D$, denoted by $\CD(D)$, is 
obtained from~$D$ by collapsing all vertices in each~$C_i$ to a 
single vertex~\cite{SedgewickW2011}, which we also denote as~$C_i$.
It is known that the condensation digraph of any digraph is a dag.
Hence we may assume without loss of generality that $C_1,\ldots,C_t$ is a
topological order of the vertices of~$\CD(D)$, that is, all arcs in $\CD(D)$ go
from a~$C_i$ to a~$C_j$ with $i < j$. 

Let~$D_1$ and~$D_2$ be two digraphs.
The \defi{Cartesian product} of~$D_1$ and~$D_2$ is the digraph whose vertex set
is the Cartesian product $V(D_1) \times V(D_2)$, and there is an arc between
$(u_1,u_2)$ and $(v_1,v_2)$ if and only if $u_1v_1 \in A(D_1)$ and $u_2 = v_2$,
or $u_2v_2 \in A(D_2)$ and $u_1 = v_1$. 
In what follows, we abuse notation and treat a strongly connected component of
a digraph as its vertex set. 

\begin{lemma}
\label{lem:node_set_cartesian}
  Let~$D$ be a digraph and $C_1,\ldots,C_t$ be a topological order of the
  strongly connected components of~$D$. 
  Let~$A$ be a node of~$F_k(D)$ and let~$k_j = |A \cap C_j|$ for $j =
  1,\ldots,t$.
  Then the strongly connected component of~$F_k(D)$ containing~$A$ is isomorphic
  to the Cartesian product of~$F_{k_j}(C_j)$ for every $j$ with $k_j > 0$. 
\end{lemma}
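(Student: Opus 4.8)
The plan is to identify the strongly connected component of $F_k(D)$ containing $A$ with the induced subdigraph $H$ of $F_k(D)$ whose nodes are exactly the $k$-subsets $X$ of $V(D)$ with $|X \cap C_j| = k_j$ for every $j$, and then to exhibit an explicit isomorphism between $H$ and the Cartesian product of the $F_{k_j}(C_j)$ with $k_j>0$. The whole argument splits into a ``the component cannot be larger than $H$'' step (via a potential function), a ``$H$ is isomorphic to the product'' step, and a ``$H$ is strongly connected, hence equals the component'' step.

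First I would show that no node outside $H$ lies in the same strongly connected component of $F_k(D)$ as $A$. For $v \in V(D)$ let $\psi(v)$ be the index of the strongly connected component of $D$ containing $v$, and for a node $X$ of $F_k(D)$ put $\Phi(X) = \sum_{x \in X} \psi(x)$. If $XY \in A(F_k(D))$ with $\{a\} = X \setminus Y$, $\{b\} = Y \setminus X$ and $(a,b) \in A(D)$, then, since $C_1,\ldots,C_t$ is a topological order of $\CD(D)$, we have $\psi(a) \le \psi(b)$, so $\Phi(Y) - \Phi(X) = \psi(b) - \psi(a) \ge 0$, with equality if and only if $a$ and $b$ lie in the same $C_j$, i.e.\ the token move stays inside a single strongly connected component of $D$. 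Hence $\Phi$ is non-decreasing along every walk in $F_k(D)$; a strongly connected component has a closed walk through each of its nodes, so $\Phi$ is constant on it and every arc traversed by such a walk is a within-component move, which preserves the vector $(|X\cap C_1|,\ldots,|X\cap C_t|)$. Since this vector equals $(k_1,\ldots,k_t)$ at $A$, every node in the strongly connected component of $A$ belongs to $H$.

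Next I would construct the isomorphism. Every node $X$ of $H$ decomposes uniquely as $X = \bigcup_j (X \cap C_j)$ with $X \cap C_j$ a $k_j$-subset of $C_j$, so $\Theta(X) = (X \cap C_j)_{j:\,k_j>0}$ is a bijection from $V(H)$ onto $\prod_{j:\,k_j>0} V(F_{k_j}(C_j))$. For the arcs: an arc $XY$ of $H$ is a within-component move (by the equality analysis above), say inside $C_j$, so $X \cap C_i = Y \cap C_i$ for all $i \ne j$, and the arc $(a,b)$ producing it has $a,b \in C_j$, hence $(a,b) \in A(C_j)$ (a strongly connected component is an induced subdigraph) and $(X\cap C_j)(Y\cap C_j)$ is an arc of $F_{k_j}(C_j)$; thus $\Theta(X)\Theta(Y)$ is an arc of the product. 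Conversely, an arc of $\prod_{j:\,k_j>0}F_{k_j}(C_j)$ changes a single coordinate $j$ along an arc of $F_{k_j}(C_j)$, which comes from an arc of $C_j \subseteq D$; pulling it back through $\Theta$ gives an arc of $F_k(D)$ between two nodes of $H$, hence an arc of $H$. So $H \cong \prod_{j:\,k_j>0} F_{k_j}(C_j)$.

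Finally I would note that each factor $F_{k_j}(C_j)$ with $k_j>0$ is strongly connected: this is trivial if $k_j = |C_j|$ (a single node), and follows from Lemma~\ref{lem:strong_directedpath} applied in both directions when $1 \le k_j \le |C_j|-1$, since $C_j$ is strongly connected. A Cartesian product of strongly connected digraphs is strongly connected (route through the coordinates one at a time), so $H$ is strongly connected; as $A \in V(H)$, $H$ is contained in the strongly connected component of $A$, and combined with the second paragraph this forces equality, which together with the isomorphism finishes the proof. I expect the main obstacle to be that first ``potential'' step — making precise that along a closed walk inside a strongly connected component every token move must stay within one $C_j$, so that the component of $A$ cannot be strictly larger than $H$; the identification of $H$ with the product is then essentially bookkeeping.
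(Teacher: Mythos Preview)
Your proof is correct and follows the same strategy as the paper's: both identify the strongly connected component of $A$ with the set $H$ of nodes sharing the profile vector $(k_1,\ldots,k_t)$ and then match $H$ with the Cartesian product. Your potential function $\Phi$ is a clean repackaging of the paper's ``tokens only move forward in the topological order'' argument, and your appeal to strong connectivity of the product (via Lemma~\ref{lem:strong_directedpath} on each factor) replaces the paper's explicit construction of $AB$- and $BA$-paths within $H$, but the underlying ideas coincide.
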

\begin{proof}
  Recall that nodes of~$F_k(D)$ are $k$-token configurations of~$D$. 
  So~$A$ is a $k$-token configuration of~$D$. 
  Let~$B$ be an arbitrary $k$-token configuration with exactly~$k_j$ tokens in
  each~$C_j$.
  Then~$B$ is a node in the Cartesian product of~$F_{k_j}(C_j)$ for every~$j$
  with $k_j > 0$. 
  Let us argue that~$A$ and~$B$ are in the same strongly connected component
  of~$F_k(D)$.
  For $i = 0,\ldots, t$, let~$A_i$ be the $k$-token configuration that
  coincides with~$B$ in $C_1,\ldots,C_i$ and coincides with~$A$ in
  $C_{i+1},\ldots,C_t$.
  Note that $A_0=A$ and $A_t=B$. 
  Similarly to what we did in Lemma~\ref{lem:strong_directedpath}, for
  each~$i$, there is an oriented path from~$A_{i-1}$ to~$A_i$ that only moves
  tokens within~$C_i$.
  By concatenating these oriented paths, one obtains an $AB$-path.
  Similarly, one can deduce that there is a $BA$-path.
  So~$A$ and~$B$ are in the same strongly connected component of~$F_k(D)$. 

  On the other hand, let~$B$ be a node of~$F_k(D)$ with $|A \cap C| \neq |B
  \cap C|$ for some strongly connected component~$C$ of~$D$.
  Then~$B$ is not in the Cartesian product of~$F_{k_j}(C_j)$ for every $j$
  with~$k_j > 0$.
  Let us argue that, in this case, $A$ and~$B$ are in different strongly
  connected component of~$F_k(D)$.
  Let~$C$ be the first strongly connected component, in the order, such that
  $|A \cap C| \neq |B \cap C|$.
  Assume that $|A \cap C| < |B \cap C|$.  (The other case is symmetric.) 
  Let us prove that there is no $AB$-path.
  Observe that a token can only move inside the same strongly connected
  component of~$D$, or to a component that appears ahead in the order of the
  strongly connected components.
  So, because~$A$ has more tokens than~$B$ in components after~$C$ in the
  order, and none of these tokens can move to~$C$ or to components before~$C$
  in the order, there is no way to move the tokens that started in~$A$ so that
  they finish in~$B$. 
  That is, there is no $AB$-path in~$F_k(D)$.
  Hence~$A$ and~$B$ cannot be in the same strongly connected component
  of~$F_k(D)$.

  Therefore, the node set of the strongly connected component of~$F_k(D)$
  containing~$A$ is isomorphic to the node set of the Cartesian product of
  $F_{k_j}(C_j)$ for every $j$ with $k_j>0$. 
  Moreover, let~$B_1$ and~$B_2$ be two nodes in the strongly connected
  component of~$F_k(D)$ containing~$A$.
  Then~$B_1$ and~$B_2$ are in the Cartesian product of $F_{k_j}(C_j)$ for
  every~$j$ with $k_j>0$.
  There is an arc from~$B_1$ to~$B_2$ in~$F_k(D)$ if and only if, for some $j
  \in \{1,\ldots,t\}$ with $k_j > 0$, there is an arc $(u,v) \in A(D)$ with $u,
  v \in C_j$ so that $B_1 = B_2 \setminus \{u\} \cup \{v\}$.
  And this holds if and only if there is an arc in the Cartesian product
  of~$F_{k_j}(C_j)$ for every~$j$ with $k_j>0$. 
\hfill$\qed$
\end{proof}

Let~$D$ be a digraph and $C_1,\ldots,C_t$ be a topological order of the
strongly connected components of~$D$, as in Lemma~\ref{lem:node_set_cartesian}. 
For each integer vector $(k_1,\ldots,k_t)$ such that 
$0 \leq k_i \leq |C_i|$ for every~$i$ and $\sum_{i=1}^t k_i = k$, 
there is a strongly connected component of~$F_k(D)$ isomorphic to 
the Cartesian product of~$F_{k_j}(C_j)$ for every $j$ with $k_j > 0$. 
Let $V_k(c_1,\ldots,c_t)$ be the set of such vectors, where $c_i = |C_i|$. 
For two vectors $(k_1,\ldots,k_t)$ and $(k'_1,\ldots,k'_t)$ in
$V_k(c_1,\ldots,c_t)$, we say \defi{there is an $(i,j)$-move from
$(k_1,\ldots,k_t)$ to $(k'_1,\ldots,k'_t)$} if $k_\ell = k'_\ell$ for every
$\ell \neq i,j$, and~$k'_i = k_i - 1$ and~$k'_j = k_j + 1$. 
For a node~$A$ of~$F_k(D)$, let $k_j = |A \cap C_j|$ for~$j = 1,\ldots,t$. 
We say the integer vector $(k_1,\ldots,k_t)$ is the \defi{vector associated}
to~$A$. 
Note that this vector is in $V_k(c_1,\ldots,c_t)$. 

We are ready to completely characterize the condensation digraph of~$F_k(D)$ in
terms of the condensation digraph of~$D$.
See Figure~\ref{fig:charact_condensation_token_digraphs} for an example.

\begin{theorem}
\label{thm:charact_condensation_token_digraphs}
  Let~$D$ be a digraph and $C_1,\ldots,C_t$ be a topological order of its
  strongly connected components.
  The condensation digraph~$\CD(F_k(D))$ is isomorphic to the dag whose vertex
  set is $V_k(|C_1|,\ldots,|C_t|)$ and with an arc from $(k_1,\ldots,k_t)$ to
  $(k'_1,\ldots,k'_t)$ if and only if there are indices $i < j$ such that there
  was an $(i,j)$-move from $(k_1,\ldots,k_t)$ to $(k'_1,\ldots,k'_t)$ and there
  is an arc from~$C_i$ to~$C_j$ in $\CD(D)$.
\end{theorem}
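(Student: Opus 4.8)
The plan is to exhibit an explicit isomorphism between $\CD(F_k(D))$ and the described dag, call it $\mathcal{H}$, by using the structural picture already established in Lemma~\ref{lem:node_set_cartesian}. That lemma tells us exactly what the strongly connected components of $F_k(D)$ are: they are indexed by the vectors in $V_k(|C_1|,\dots,|C_t|)$, with the component associated to $(k_1,\dots,k_t)$ being the Cartesian product of the $F_{k_j}(C_j)$ with $k_j > 0$. So the map sending each node $A$ of $\CD(F_k(D))$ (i.e.\ each strong component of $F_k(D)$) to its associated vector is well defined, and by Lemma~\ref{lem:node_set_cartesian} it is a bijection onto $V(\mathcal{H}) = V_k(|C_1|,\dots,|C_t|)$. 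It remains to verify that this bijection is an isomorphism of digraphs, i.e.\ that there is an arc between two components of $F_k(D)$ (in the condensation sense) exactly when the corresponding arc-condition holds in $\mathcal{H}$.

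First I would fix the "arc in the condensation" convention: there is an arc from strong component $\mathcal{S}$ to strong component $\mathcal{S}'$ in $\CD(F_k(D))$ iff $\mathcal{S} \neq \mathcal{S}'$ and some node of $\mathcal{S}$ has an out-arc in $F_k(D)$ to some node of $\mathcal{S}'$. Now take two distinct vectors $v = (k_1,\dots,k_t)$ and $v' = (k'_1,\dots,k'_t)$ with associated components $\mathcal{S}_v$, $\mathcal{S}_{v'}$. For the "if" direction: suppose there are indices $i < j$ with an $(i,j)$-move from $v$ to $v'$ and an arc $(u,w) \in A(D)$ with $u \in C_i$, $w \in C_j$. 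Pick any node $A \in \mathcal{S}_v$ with $u \in A$ and $w \notin A$ — such a node exists since $k_i \geq 1$ and $k_j \leq |C_j| - 1$, both guaranteed by the $(i,j)$-move together with $v,v' \in V_k$ — and slide the token from $u$ to $w$; the resulting node $B$ has associated vector $v'$, so $B \in \mathcal{S}_{v'}$, and $AB \in A(F_k(D))$, giving the desired arc.

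For the "only if" direction: suppose there is an arc from $\mathcal{S}_v$ to $\mathcal{S}_{v'}$ in the condensation, so there are nodes $A \in \mathcal{S}_v$, $B \in \mathcal{S}_{v'}$ with $AB \in A(F_k(D))$. By definition of $F_k(D)$, $B = A \setminus \{u\} \cup \{w\}$ for some arc $(u,w) \in A(D)$. Since $v \neq v'$, the single token that moved must cross from one strong component of $D$ to another; say $u \in C_i$ and $w \in C_j$ with $i \neq j$. Then comparing associated vectors, $v$ and $v'$ agree outside coordinates $i,j$, $k'_i = k_i - 1$, and $k'_j = k_j + 1$, so this is an $(i,j)$-move; and since $(u,w) \in A(D)$ with $u \in C_i$, $w \in C_j$, there is an arc from $C_i$ to $C_j$ in $\CD(D)$, which forces $i < j$ because $C_1,\dots,C_t$ is a topological order. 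So the arc-condition of $\mathcal{H}$ holds. Finally I would note that $\mathcal{H}$ is a dag — each arc strictly increases, say, $\sum_i i \cdot k_i$, or more simply moves a unit of "mass" to a later component — which is consistent with $\CD(F_k(D))$ being a dag, and in fact the argument above shows the two digraphs are isomorphic as claimed.

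I expect the main obstacle to be purely bookkeeping rather than conceptual: carefully checking that the node $A$ chosen in the "if" direction genuinely lies in $\mathcal{S}_v$ (which is immediate from Lemma~\ref{lem:node_set_cartesian} once one confirms its associated vector is $v$) and that the feasibility inequalities $k_i \geq 1$, $k_j \leq |C_j|-1$ needed to perform the slide follow from $v, v' \in V_k(|C_1|,\dots,|C_t|)$ and the definition of an $(i,j)$-move. One should also be slightly careful that the isomorphism is on the level of digraphs (vertices \emph{and} arcs), but since Lemma~\ref{lem:node_set_cartesian} already pins down the node sets of the components, the only real content here is the arc characterization, handled by the two directions above.
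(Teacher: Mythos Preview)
Your proposal is correct and follows essentially the same route as the paper: both invoke Lemma~\ref{lem:node_set_cartesian} to identify the vertex set of $\CD(F_k(D))$ with $V_k(|C_1|,\dots,|C_t|)$, and then characterize the arcs by observing that an arc between distinct components of $F_k(D)$ corresponds precisely to a single token sliding along an arc of $D$ between two distinct strong components $C_i$ and $C_j$, yielding an $(i,j)$-move with $i<j$. Your write-up is in fact slightly more explicit than the paper's in verifying the existence of a suitable configuration $A$ in the ``if'' direction, but the underlying argument is the same.
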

\begin{proof}
  Lemma~\ref{lem:node_set_cartesian} characterized the strongly connected 
  components of~$F_k(D)$, and thus the vertex set of its condensation 
  digraph~$\CD(F_k(D))$. 
  Let $(k_1,\ldots,k_t)$ and $(k'_1,\ldots,k'_t)$ be vertices of~$\CD(F_k(D))$.
  Observe that there is an arc from $(k_1,\ldots,k_t)$ to $(k'_1,\ldots,k'_t)$
  if and only if there are two nodes~$A$ and~$B$ of~$F_k(D)$ with associated
  vectors $(k_1,\ldots,k_t)$ and $(k'_1,\ldots,k'_t)$ respectively, such that
  there is an arc in~$F_k(D)$ from~$A$ to~$B$. 

  That happens if and only if~$A$ and~$B$ differ in exactly one token, and this
  token must have moved from a vertex~$a$ to a vertex~$b$ of~$D$, through an
  arc of~$D$, with~$a$ and~$b$ being in different strongly connected components
  of~$D$.  
  Let~$C_i$ and~$C_j$ be the strongly connected components of~$D$
  containing~$a$ and~$b$ respectively.
  Then there is an arc from~$C_i$ to~$C_j$ in~$\CD(D)$ and so $i < j$. 
  Moreover, $k_\ell = k'_\ell$ for every $\ell \neq i,j$, and $k'_i = k_i - 1$
  and $k'_j = k_j + 1$. 
  Hence there was an $(i,j)$-move from $(k_1,\ldots,k_t)$ to
  $(k'_1,\ldots,k'_t)$, and the theorem holds.
\hfill$\qed$
\end{proof}

\begin{figure}[h!]
    \centering
    \resizebox{\textwidth}{!}{\includegraphics{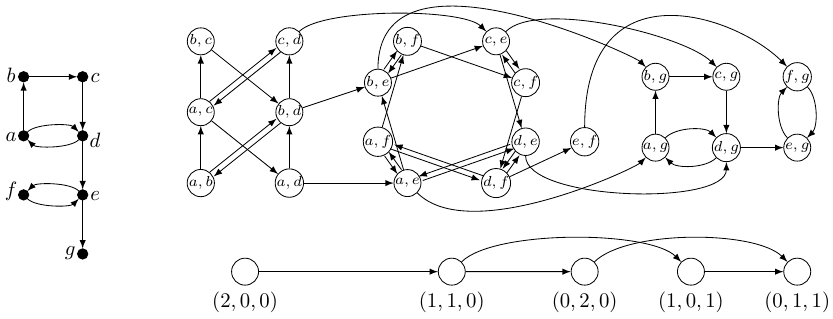}}
    \caption{A digraph~$D$ with~3 strongly connected components, the $2$-token
    digraph~$F_2(D)$ with~5 strongly connected components (for simplicity, we
    omit the bracets and place labels inside the vertices), and the digraph
    isomorphic to~$\CD(F_2(D))$ whose vertex set is $V_2(4,2,1)$.}
    \label{fig:charact_condensation_token_digraphs}
\end{figure}

In particular, we derive the following from
Theorem~\ref{thm:charact_condensation_token_digraphs}.

\begin{corollary}\label{cor:strongcomp}
  The digraph $F_k(D)$ is strongly connected if and only if $D$ is strongly
  connected. 
\end{corollary}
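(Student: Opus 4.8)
The statement is a biconditional, and one direction is already in hand: if $D$ is strongly connected, then Lemma~\ref{lem:strong_directedpath} provides, for any two nodes $A,B$ of $F_k(D)$, both an $AB$-path and a $BA$-path, so $F_k(D)$ is strongly connected (this is exactly the observation made in the paragraph following that lemma). So the plan is to concentrate on the converse, and to prove its contrapositive using Theorem~\ref{thm:charact_condensation_token_digraphs}. The guiding fact is that a nonempty digraph is strongly connected precisely when its condensation digraph has a single vertex; since $F_k(D)$ always has $\binom{n}{k}\geq 1$ nodes, it suffices to show that if $D$ is not strongly connected then $\CD(F_k(D))$ has at least two vertices.

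Concretely, I would assume $D$ is not strongly connected and let $C_1,\dots,C_t$ with $t\geq 2$ be a topological order of its strongly connected components, writing $c_i=|C_i|$ and $n=\sum_i c_i$. By Theorem~\ref{thm:charact_condensation_token_digraphs}, the vertex set of $\CD(F_k(D))$ is $V_k(c_1,\dots,c_t)$, so the whole corollary reduces to the following counting claim: if $t\geq 2$ and $1\leq k\leq n-1$, then $|V_k(c_1,\dots,c_t)|\geq 2$, i.e.\ there exist at least two distinct integer vectors $(k_1,\dots,k_t)$ with $0\leq k_i\leq c_i$ and $\sum_i k_i=k$. Once this is established, $\CD(F_k(D))$ has at least two vertices, $F_k(D)$ has at least two strongly connected components, and hence $F_k(D)$ is not strongly connected, as desired.

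To prove the counting claim I would simply exhibit two such vectors: the ``left-packed'' distribution $u$ obtained by greedily filling $C_1$, then $C_2$, and so on until the $k$ tokens are exhausted, and the ``right-packed'' distribution $v$ obtained by the symmetric process starting from $C_t$. Both lie in $V_k(c_1,\dots,c_t)$ by construction. Since $k\geq 1$, each of $u$ and $v$ has a nonzero entry; by left-packedness the set of nonzero coordinates of $u$ is an initial segment $\{1,\dots,p\}$ (take $p$ to be the largest index with $u_p\geq 1$, so that $u_i=c_i$ for $i<p$), and symmetrically the nonzero coordinates of $v$ form a final segment $\{q,\dots,t\}$. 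If $u=v$, these segments coincide, forcing $q=1$ and $p=t$; then $u_i=c_i$ for all $i<t$ and $v_i=c_i$ for all $i>1$, so (using $t\geq 2$) every coordinate of $u=v$ equals the corresponding $c_i$, giving $k=\sum_i c_i=n$ and contradicting $k\leq n-1$. Hence $u\neq v$ and $|V_k(c_1,\dots,c_t)|\geq 2$.

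I do not expect a real obstacle here: essentially all the work is carried by Theorem~\ref{thm:charact_condensation_token_digraphs}, and what remains is the elementary counting claim above. The only point requiring care is the bookkeeping in that claim when $k$ happens to exactly fill a prefix (or suffix) of the components, so that the ``partial'' component is in fact full; defining $p$ (resp.\ $q$) as the \emph{largest} (resp.\ \emph{smallest}) index carrying a nonzero entry, as above, removes any ambiguity.
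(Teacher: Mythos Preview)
Your proposal is correct and follows essentially the same approach as the paper: the corollary is stated there without proof, as an immediate consequence of Theorem~\ref{thm:charact_condensation_token_digraphs} (together with the observation after Lemma~\ref{lem:strong_directedpath} for the forward direction). Your argument simply spells out the one detail the paper leaves implicit, namely that $|V_k(c_1,\dots,c_t)|\geq 2$ whenever $t\geq 2$ and $1\leq k\leq n-1$, and your left-packed/right-packed construction does this cleanly.
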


\begin{corollary}\label{cor:acyclic}
  The digraph $F_k(D)$ is acyclic if and only if $D$ is acyclic.
\end{corollary}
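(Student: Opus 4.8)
The plan is to prove the two implications separately, using the standard fact that a digraph is acyclic if and only if each of its strongly connected components is a single node.

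For the ``if'' direction, assume $D$ is acyclic, so $D$ admits a topological order, i.e.\ a function $f\colon V(D)\to\{1,\dots,n\}$ with $f(u)<f(v)$ for every arc $(u,v)\in A(D)$. For a node $X=\{x_1,\dots,x_k\}$ of $F_k(D)$ set $\Phi(X)=\sum_{i=1}^{k} f(x_i)$. If $(X,Y)$ is an arc of $F_k(D)$, then $Y$ arises from $X$ by replacing some $u\in X$ with some $v$ such that $(u,v)\in A(D)$, hence $\Phi(Y)-\Phi(X)=f(v)-f(u)>0$. So $\Phi$ strictly increases along every arc of $F_k(D)$, which rules out oriented cycles, and $F_k(D)$ is acyclic. (Alternatively, this direction is immediate from Lemma~\ref{lem:node_set_cartesian}: if $D$ is acyclic, every strongly connected component $C_j$ of $D$ is a single vertex, so each factor $F_{k_j}(C_j)$ with $k_j>0$ is a single node, and therefore so is every strongly connected component of $F_k(D)$.)

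For the ``only if'' direction I would argue by contraposition, producing an oriented cycle in $F_k(D)$ from one in $D$. If $D$ is not acyclic it has a strongly connected component $C$ with $m:=|C|\ge 2$. Choose an integer $k_C$ with $\max\{1,\,k-(n-m)\}\le k_C\le\min\{m-1,\,k\}$; this interval is nonempty exactly because $1\le k\le n-1$ and $m\ge 2$. Form a node $A$ of $F_k(D)$ with precisely $k_C$ tokens inside $C$ and the remaining $k-k_C\le n-m$ tokens on vertices outside $C$. Since $C$, regarded as a subdigraph of $D$, is strongly connected, Lemma~\ref{lem:strong_directedpath} shows $F_{k_C}(C)$ is strongly connected, and it has $\binom{m}{k_C}\ge 2$ nodes because $1\le k_C\le m-1$; a strongly connected digraph with at least two nodes contains an oriented cycle, so pick one, $Z_0\to Z_1\to\cdots\to Z_p=Z_0$ with $p\ge 2$, each step moving a single token within $C$. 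Letting $R=A\setminus C$ be the fixed set of outside tokens, the nodes $Z_0\cup R,\dots,Z_p\cup R$ form an oriented cycle in $F_k(D)$: $R$ is disjoint from $C$, so consecutive such nodes still differ by the same token move along an arc of $D$. (One could instead invoke Lemma~\ref{lem:node_set_cartesian}: the strongly connected component of $F_k(D)$ containing $A$ has $F_{k_C}(C)$ as a Cartesian factor, hence has at least two nodes and is nontrivial, so $F_k(D)$ is not acyclic.)

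The only delicate point is the token bookkeeping in the ``only if'' direction: one must keep at least one and at most $m-1$ tokens inside a nontrivial strongly connected component $C$ while fitting the rest outside it, and it is precisely the hypothesis $1\le k\le n-1$ that guarantees a valid choice of $k_C$ always exists. Everything else follows routinely from Lemmas~\ref{lem:strong_directedpath} and~\ref{lem:node_set_cartesian}.
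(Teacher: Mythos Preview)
Your proof is correct. The paper simply states that the corollary follows from Theorem~\ref{thm:charact_condensation_token_digraphs} without spelling out details, and your parenthetical alternatives via Lemma~\ref{lem:node_set_cartesian} are exactly that intended derivation: $D$ is acyclic iff every strongly connected component of $D$ is a singleton iff every strongly connected component of $F_k(D)$ is a singleton iff $F_k(D)$ is acyclic.

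Your primary arguments take a slightly different, more hands-on route. For the ``if'' direction, the potential function $\Phi(X)=\sum_{x\in X} f(x)$ built from a topological order of $D$ is a clean, self-contained argument that bypasses the structural lemma entirely. For the ``only if'' direction, rather than invoking the Cartesian-product description of the components, you explicitly manufacture a cycle in $F_k(D)$ from a nontrivial component $C$ of $D$, using Lemma~\ref{lem:strong_directedpath} to guarantee $F_{k_C}(C)$ is strongly connected on at least two nodes. The bookkeeping ensuring a valid $k_C$ exists is exactly right and is the one place where care is needed. Both of your routes are arguably more elementary than appealing to the full condensation characterization, at the cost of being a bit longer; the paper's route is terser but leans on heavier machinery already established.
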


\section{Kernels}
\label{sec:kernels}

For a set~$S$ of vertices in a digraph~$D$, 
let $N^-[S] = S \cup \{u \in V(D) : (u,v) \in A(D) \mbox{ and } v \in S\}$.
A set~$K$ of vertices in~$D$ is a \defi{kernel} if~$K$ is independent and
$N^-[K] = V(D)$.
This notion was introduced by von Neumann and
Morgenstern~\cite{vonNeumannM1944}, who showed the following result.

\begin{theorem}[\cite{vonNeumannM1944}]
\label{thm:vonNeumannM}
    Every digraph with no oriented odd cycle has a unique kernel.
\end{theorem}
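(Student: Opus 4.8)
The plan is to prove the two assertions separately---first that a kernel exists, then that it is unique---organising both arguments around the strongly connected components of~$D$. Recall that the condensation~$\CD(D)$ is a dag, so we may fix a topological order $C_1,\ldots,C_t$ of the components, with every arc of~$\CD(D)$ going from some~$C_i$ to some~$C_j$ with $i<j$. Two structural facts will drive everything. First, a terminal component~$C_t$ (a sink of~$\CD(D)$) has no arc leaving it, so any kernel of~$D$ restricts to a kernel of~$C_t$, and conversely the presence of a kernel on~$C_t$ can only be influenced by~$C_t$ itself. Second, a strongly connected digraph~$H$ with no odd oriented cycle carries a canonical parity $2$-colouring: fixing a vertex~$v_0$ and setting $\phi(u)\in\{0,1\}$ to be the parity of the length of any $v_0u$-walk, the value $\phi(u)$ is well defined, because two $v_0u$-walks of opposite parity together with a $uv_0$-walk (which exists by strong connectivity) would form an odd closed walk, and an odd closed walk always contains an odd oriented cycle.

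For existence I would induct on $|V(D)|$. If~$D$ is strongly connected, the parity colouring above does the job: every arc flips~$\phi$, so each colour class is independent, and since strong connectivity forces every vertex to have out-degree at least~$1$ with all its out-arcs landing in the opposite class, each colour class is also dominating, i.e.\ a kernel. If~$D$ is not strongly connected, take a terminal component~$C_t$, let~$K_{C_t}$ be a kernel of~$C_t$ obtained from the strongly connected case, and pass to $D' = D - N^-[K_{C_t}]$; since~$D'$ still has no odd oriented cycle, induction yields a kernel~$K'$ of~$D'$, and $K = K' \cup K_{C_t}$ is then independent and dominating in~$D$ (no arc leaves~$C_t$, and the vertices of $N^-[K_{C_t}]$ are dominated into~$C_t$), which closes the induction.

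For uniqueness I would process the components in reverse topological order $C_t,C_{t-1},\ldots,C_1$ and show, by downward induction, that every kernel of~$D$ is forced on each successive component. The base case fixes the restriction to each sink~$C_t$; the inductive step must argue that, once the kernels on $C_{j+1},\ldots,C_t$ are pinned down, the domination obligations they impose on~$C_j$---together with independence---leave only one admissible kernel on~$C_j$. This last point is exactly where the main obstacle lies: inside a nontrivial strongly connected component the parity colouring offers two independent dominating classes, so the heart of the proof is to show that the arcs running from~$C_j$ into the already-determined later components single out one of the two parity classes and thereby remove the apparent freedom. Establishing that the external constraints propagate so as to eliminate this ambiguity in every component is the crux on which the whole uniqueness statement rests, and I expect it to require the most care.
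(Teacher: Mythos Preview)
The paper does not prove this theorem; it is quoted from von Neumann and Morgenstern and used as a black box, so there is no argument in the paper to compare yours against.

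More to the point, the uniqueness claim as stated is false, and your own construction exhibits the counterexample. Take the directed $4$-cycle $a\to b\to c\to d\to a$, or even a single digon: it is strongly connected with no oriented odd cycle, and your parity colouring produces two classes $\{a,c\}$ and $\{b,d\}$, \emph{both} of which are independent and absorbing, hence kernels. The ``ambiguity'' you flag as the crux of the uniqueness argument is therefore not a technical hurdle to be handled with care but a genuine obstruction: a digraph consisting of a single strongly connected component has no arcs to later components, so there is nothing external to break the symmetry you discovered, and uniqueness simply fails. The correct classical statements are that every digraph with no oriented odd cycle has \emph{a} kernel (Richardson's theorem, which is exactly what your existence argument establishes, and establishes correctly), while uniqueness requires the stronger hypothesis that the digraph be acyclic---which is in fact the only case the paper goes on to use.
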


Theorem~\ref{thm:vonNeumannM}, together with Corollary~\ref{cor:acyclic}, also
shows that the token digraph of an acyclic digraph has a unique kernel.
Indeed, the kernel of a dag~$D$ can be obtained iteratively as follows.
Start with $D'=D$ and let~$S_1$ be the set of sinks of~$D'$; 
remove~${N^-[S_1]}$ from~$D'$ and repeat the process until~$D'$
vanishes, that is, let~$S_2$ be the set of sinks of the remaining~$D'$; 
remove~${N^-[S_2]}$ from~$D'$ and so on.
The set $K = S_1 \cup S_2 \cup \cdots$ is the unique kernel of~$D$. 

We now prove the following. 

\begin{theorem} 
  \label{thm:odd_cycles}
  If $D$ is a digraph with no oriented odd cycles, then $F_k(D)$ has no
  oriented odd cycles.
\end{theorem}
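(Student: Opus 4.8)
The plan is to read an oriented cycle of $F_k(D)$ as a schedule of token moves and then, by following each token individually, to split it into closed directed walks of~$D$, whose lengths are forced to be even by the hypothesis.

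In detail, suppose $\mathcal{C}=(A_0,A_1,\dots,A_{L-1},A_L)$ with $A_L=A_0$ is an oriented cycle of $F_k(D)$ of length~$L$; I want to show $L$ is even. Write $A_0=\{p_1,\dots,p_k\}$ and put a token labelled~$i$ on~$p_i$. The arc $(A_{s-1},A_s)$ of~$\mathcal{C}$ comes from an arc $(a_s,b_s)\in A(D)$ with $A_{s-1}\setminus A_s=\{a_s\}$ and $A_s\setminus A_{s-1}=\{b_s\}$; since the $k$ tokens always lie on distinct vertices, exactly one token sits on~$a_s$, and we slide it to~$b_s$. Thus each token~$i$ traces a directed walk $W_i$ in~$D$ starting at~$p_i$, and, as each of the $L$ steps moves exactly one token, $\sum_{i=1}^k \ell(W_i)=L$, where $\ell$ denotes length. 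Since $A_L=A_0$, the endpoints of the $W_i$ again form the set $\{p_1,\dots,p_k\}$, so $W_i$ ends at $p_{\sigma(i)}$ for some permutation~$\sigma$ of $\{1,\dots,k\}$.

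Now I would decompose $\sigma$ into disjoint cyclic factors. For a factor $(i_1\,i_2\,\cdots\,i_m)$, the walk $W_{i_1}$ runs from $p_{i_1}$ to $p_{i_2}$, the walk $W_{i_2}$ from $p_{i_2}$ to $p_{i_3}$, \dots, and $W_{i_m}$ from $p_{i_m}$ to $p_{i_1}$; hence the concatenation $W_{i_1}W_{i_2}\cdots W_{i_m}$ is a closed directed walk of~$D$, of length $\ell(W_{i_1})+\cdots+\ell(W_{i_m})$. Summing over all cyclic factors of~$\sigma$ expresses $L$ as a sum of lengths of closed directed walks of~$D$. Finally, every closed directed walk of~$D$ has even length: it decomposes into a disjoint union of oriented cycles of~$D$ (a standard fact --- repeatedly extract an oriented cycle through a repeated vertex and recurse on the remaining balanced arc multiset), each of which has even length by hypothesis. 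Therefore $L$ is even, so $F_k(D)$ has no oriented odd cycle.

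The step I expect to require the most care is the bookkeeping around~$\sigma$: checking that ``the token on~$a_s$'' is always unambiguous, and that the per-factor concatenations really close up, so that the $L$ moves are partitioned cleanly among the closed walks. The closed-walk-to-cycles decomposition is routine but should be stated explicitly. If one prefers to sidestep the permutation argument, an alternative is a potential-function proof: by Theorem~\ref{thm:charact_condensation_token_digraphs}, the number of tokens in each strongly connected component of~$D$ is constant along any oriented cycle of $F_k(D)$, so such a cycle only moves tokens within components; since ``no oriented odd cycle'' forces each strongly connected component~$C$ to admit a $2$-colouring of its vertices under which every arc inside~$C$ flips colour, the parity of the total colour of the tokens lying in~$C$, summed over all components, flips at every arc of the cycle, again forcing even length.
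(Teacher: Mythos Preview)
Your main argument is correct and genuinely different from the paper's.

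The paper leans on the structural machinery of Section~\ref{sec:strong_connectivity}: since $D$ has no oriented odd cycle, each strongly connected component~$C$ of~$D$ is bipartite; a parity count on the number of tokens in one side of the bipartition shows each $F_i(C)$ is bipartite; then Lemma~\ref{lem:node_set_cartesian} (each strongly connected component of $F_k(D)$ is a Cartesian product of such $F_i(C)$'s) together with the fact that Cartesian products of bipartite digraphs are bipartite finishes the proof. Your approach bypasses all of this: you read the cycle of $F_k(D)$ as a schedule, follow each token, stitch the resulting walks along the cycle decomposition of the induced permutation into closed walks in~$D$, and then use the elementary fact that a closed directed walk decomposes into oriented cycles of~$D$, each even by hypothesis. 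This is more self-contained and does not require Lemma~\ref{lem:node_set_cartesian} or the Cartesian-product fact; the paper's route, on the other hand, reuses results already in hand and makes the bipartiteness of the strongly connected components of $F_k(D)$ explicit. Your closing ``alternative'' potential-function argument is essentially the paper's proof rephrased as a parity invariant rather than via Cartesian products.

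The bookkeeping you flag is indeed the only place needing care, and it goes through: at each step the tokens occupy distinct vertices, so the token on~$a_s$ is unique; the lengths add to~$L$ since exactly one token moves per step; and concatenation along a cyclic factor $(i_1\,\cdots\,i_m)$ closes up because $W_{i_j}$ ends at $p_{\sigma(i_j)}=p_{i_{j+1}}$. The decomposition of a closed directed walk into oriented cycles (via the balanced arc multiset) is standard and your sketch is adequate.
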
  
\begin{proof}
  Because~$D$ has no oriented odd cycles, $D$ is bipartite. 
  Hence every strongly connected component~$C$ of~$D$ is also bipartite. 
  Let~$X$ and~$Y$ be a bipartition of the vertex set of~$C$. 
  For every~$i$ with $0 \leq i \leq k$, a node of~$F_i(C)$ is an $i$-token
  configuration in~$C$, and it has an even or an odd number of vertices in~$X$. 
  That induces a bipartition of~$F_i(C)$, as a move of a token goes from~$X$
  to~$Y$ or vice-versa, and always changes the parity of the configuration
  within~$X$.
  Thus, each~$F_i(C)$ is also bipartite. 

  Because each~$F_i(C)$ is bipartite, and the Cartesian product of bipartite
  digraphs is bipartite~\cite[Lemma 2.6]{Sabidussi1957}, we conclude by
  Lemma~\ref{lem:node_set_cartesian} that the strongly connected components
  of~$F_k(D)$ are bipartite, and hence~$F_k(D)$ has no oriented odd cycle. 
\hfill$\qed$
\end{proof}  

Therefore, if~$D$ has no oriented odd cycle, then~$D$ and~$F_k(D)$ have unique
kernels. 
On the other hand, if~$D$ has oriented odd cycles, then it might be the case
that~$D$ has a kernel, but~$F_k(D)$ does not, for some~$k$, and the other way
around. 
Specifically, in Figure~\ref{fig:oddcyclewithkernel}(a), we show a digraph~$D$
with a triangle that has a kernel, but for which~$F_2(D)$ does not have a
kernel, and, in Figure~\ref{fig:oddcyclewithkernel}(b), we show a digraph~$D$
that does not have a kernel, but for which~$F_2(D)$ has a kernel. 

\begin{figure}[h!]
    \centering
    \raisebox{0.2\height}{\resizebox{0.4\textwidth}{!}{\includegraphics{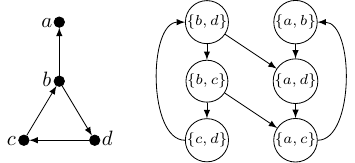}}}
    \hspace{20pt}
    \resizebox{0.45\textwidth}{!}{\includegraphics{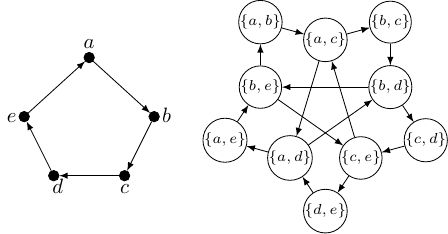}}
    \caption{Two examples of a digraph~$D$ and its $2$-token digraph~$F_2(D)$. 
    On the left, $D$ has a kernel, namely the set $\{a,c\}$, 
    while $F_2(D)$ does not. On the right, the opposite occurs: 
    the five external vertices form a kernel of~$F_2(D)$.}
    \label{fig:oddcyclewithkernel}
\end{figure}

Chvátal has shown that the problem of, given a digraph~$D$, deciding
whether~$D$ has a kernel is NP-complete~\cite{Chvatal1973}.
We adapt parts of his proof to show that the problem of, given a digraph~$D$,
deciding whether $F_2(D)$ has a kernel is also NP-complete.

Chvátal's reduction is from 3-SAT.
From a 3-SAT formula~$\phi$, he builds a digraph~$D$ such that~$D$ has a kernel
if and only if~$\phi$ is satisfied.
The idea is to use the same construction, but to add a universal sink
vertex~$u$ to~$D$.
The resulting digraph~$D'$ is such that~$\phi$ has a not-all-equal satisfying
assignment if and only if~$F_2(D')$ has a kernel.
So we would need to do the reduction from the following variant of 3-SAT, which
is also NP-complete~\cite{GareyJ1979}. 
The problem NAE-3-SAT consists of, given a 3-SAT formula, to decide whether
there is an assignment for the variables such that each clause has either one
or two true literals.
We call such an assignment a \defi{NAE assignment}. 

\begin{theorem}
  The problem of, given a digraph~$D$, deciding whether $F_2(D)$ has a kernel
  is NP-complete.
\end{theorem}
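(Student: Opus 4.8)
The plan is to give a polynomial-time reduction from NAE-3-SAT, following the outline the authors already sketched. Membership in NP is immediate: a kernel of $F_2(D')$ has polynomial size and the independence and domination conditions are checkable in polynomial time. For hardness, start from a 3-SAT formula $\phi$ and build Chv\'atal's digraph $D$ whose kernels correspond to satisfying assignments of $\phi$: for each variable $x_i$ a digon on two vertices $x_i,\bar{x}_i$ (so exactly one of them lies in any kernel, encoding a truth assignment), and for each clause $c_j$ a small gadget with arcs forcing that the kernel contains at least one literal vertex appearing in $c_j$. Then form $D'$ by adding a single new vertex $u$ together with an arc $(v,u)$ for every $v\in V(D)$, making $u$ a universal sink of $D'$.

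The heart of the argument is to analyze kernels of $F_2(D')$. A node of $F_2(D')$ is a $2$-subset of $V(D')$; write nodes as pairs. The crucial observation is how $u$ interacts: since $u$ is a sink with an incoming arc from every vertex, in $F_2(D')$ the node $\{u,v\}$ has, for each $w\neq u,v$, an incoming arc from $\{v,w\}$ (slide the token on $w$ to $u$) but its only outgoing arcs slide the token on $v$ forward along arcs of $D$. The plan is to show that a kernel $K$ of $F_2(D')$ must avoid every node of the form $\{u,v\}$ except possibly in a controlled way, and that the nodes of $F_2(D')$ not involving $u$, under the domination-by-$K$ requirement, force exactly a NAE condition on the corresponding assignment. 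Concretely, I would argue that $K$ restricted to literal-pair nodes $\{x_i,\bar{x}_i\}$-type structure encodes, for each variable, a choice of truth value, and the clause gadgets together with the forced presence/absence of $u$-nodes translate the "kernel of $D$" condition into "at least one true literal and at least one false literal per clause" — i.e.\ a NAE assignment. The correspondence must go both ways: from a NAE assignment of $\phi$ one explicitly builds a kernel of $F_2(D')$ (take the literal vertices selected by the assignment, pair them up appropriately, include suitable clause-gadget nodes and enough $u$-containing nodes to dominate the rest while staying independent), and conversely any kernel projects to a NAE assignment.

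The main obstacle, and where the bulk of the work lies, is the case analysis establishing that a kernel of $F_2(D')$ cannot "cheat" by using $u$-containing nodes to dominate literal nodes in a way that breaks the intended encoding — in particular ruling out kernels that would correspond to an all-true or all-false clause. One has to pin down exactly which nodes $\{u,v\}$ can be in a kernel: including $\{u,v\}$ forbids (by independence) all of its out-neighbours $\{u,v'\}$ with $v'$ an out-neighbour of $v$ in $D$, and fails to dominate nodes $\{v',w'\}$ with $v',w'\notin\{u,v\}$ unless those are dominated elsewhere. Managing this bookkeeping so that the surviving freedom is precisely a NAE choice is the delicate step; Chv\'atal's original gadget structure should be reused essentially verbatim for the part of $F_2(D')$ living inside $F_2(D)$, with the new analysis confined to nodes touching $u$. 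I would also double-check the boundary behaviour of the variable digons under the $F_2$ operation (a digon $x_i\bar{x}_i$ in $D$ becomes, together with $u$, a small structure in $F_2(D')$ on the nodes $\{x_i,\bar x_i\},\{x_i,u\},\{\bar x_i,u\}$), since this is exactly where the "one or two true literals" constraint enters, replacing Chv\'atal's "at least one" constraint. Once these local analyses are in place, assembling them into the global equivalence "$\phi$ has a NAE assignment $\iff$ $F_2(D')$ has a kernel" is routine.
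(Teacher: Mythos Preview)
Your outline follows the paper's approach (reduce from NAE-3-SAT via Chv\'atal's digraph plus a sink), but it is a plan rather than a proof, and two of your working intuitions are wrong in ways that would derail the argument. First, the NAE condition does \emph{not} enter through the variable digons. In the paper, the nodes $\{u,v\}$ form a copy of Chv\'atal's digraph $D'$ inside $F_2(D)$, and any kernel $K$ of $F_2(D)$ restricts to a kernel $K'=\{v:\{u,v\}\in K\}$ of $D'$; the extra constraint is that $K'$ must be \emph{special}, meaning it contains a vertex in every clause triangle. That vertex carries a false literal (so ``not all true''), while Chv\'atal's original argument already gives ``at least one true''. So your claim that $K$ ``must avoid every node of the form $\{u,v\}$'' is backwards: $K$ contains precisely the $\{u,v\}$ with $v\in K'$, and this is the mechanism, not an obstacle.

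Second, the paper makes $u$ a sink receiving arcs only from \emph{literal} vertices, not a universal sink. This is not cosmetic: with your universal sink, from a node $\{\ell,c\}$ (literal $\ell$, clause vertex $c$) you can slide $c$ to $u$ and reach $\{u,\ell\}$, which lies in $K$ whenever $\ell\in K'$; this short-circuits the paper's key contradiction, which shows that if some clause triangle has no vertex in $K'$ then a specific six-node subdigraph of $F_2(D)$ (on the $LC$ configurations over one variable digon and that triangle) would have to carry its own kernel but does not. Finally, you describe the backward direction (NAE assignment $\Rightarrow$ kernel of $F_2(D)$) as ``routine''; it is not. The paper constructs the kernel explicitly as a union of four sets $S_1,\dots,S_4$ of $2$-token configurations (splitting on whether tokens sit on $u$, literal vertices, or clause vertices, and on the dominating/undominating role within each clause triangle), and verifying independence and absorption is a substantial case analysis that your outline does not supply.
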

\begin{proof}
  Given a 3-SAT formula~$\phi$ on variables $x_1,\ldots,x_n$, call
  $C_1,\ldots,C_m$ the clauses in~$\phi$. 
  Consider the following digraph~$D$ built from~$\phi$. 
  For each variable~$x_j$, there is a vertex labeled~$x_j$ and a vertex
  labeled~$\bar{x}_j$ in~$D$.
  These are called \defi{literal vertices}.
  For each clause~$C_i$ there are three vertices in~$D$, each labeled by one of
  the three literals in~$C_i$. 
  These are called \defi{clause vertices}.
  The two literal vertices for variable~$x_j$ induce a digon in~$D$, 
  and these digons are called \defi{variable digons}.
  The three vertices of a clause induce an oriented triangle, and these are
  called the \defi{clause triangles}.
  Additionally, there is an arc from a vertex in a clause triangle to a literal
  vertex whenever they have the same label. 
  Finally, there is a vertex~$u$, with arcs from each literal vertex to~$u$. 
  See an example in Figure~\ref{fig:reduction}.

    \begin{figure}[h!]
        \centering
        \includegraphics{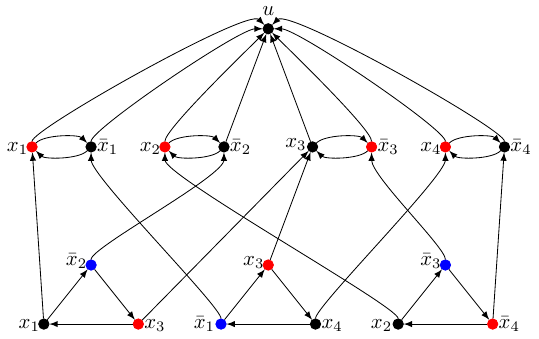}
        \caption{The digraph~$D$ for the 3-SAT formula $\phi = (x_1 \vee
        \bar{x}_2 \vee x_3) \wedge (\bar{x}_1 \vee x_3 \vee x_4) \wedge (x_2
        \vee \bar{x}_3 \vee \bar{x}_4)$. 
         The vertices in red form a kernel for the digraph $D' = D - u$.} 
        \label{fig:reduction}
    \end{figure}

  Let~$D'$ be~$D$ without vertex~$u$.
  Observe that~$D'$ is exactly the digraph from the reduction of
  Chvátal~\cite{Chvatal1973}, so we know that~$D'$ has a kernel if and only
  if~$\phi$ is satisfiable.
  Note that any kernel for~$D'$ contains a vertex in each variable digon.
  First we will prove that there is a NAE assignment for~$\phi$ 
  if and only if~$D'$ has a kernel that contains exactly one vertex 
  in each variable digon and exactly one vertex in each clause triangle.  
  We refer to such a kernel as \defi{special}.
  Second, we will prove that~$D'$ has a special kernel if and only if~$F_2(D)$
  has a kernel.  
  These two statements imply the theorem.

  Suppose there is a NAE assignment for~$\phi$.
  Let~$K$ be the set with the true literal in each variable digon, and one of
  the vertices in each clause triangle labeled by a false literal.
  When there are two vertices labeled by false literals in a clause triangle,
  choose the one that is an in-neighbor of the other.
  The red vertices in Figure~\ref{fig:reduction} show one such set~$K$
  corresponding to the assignment~${x_1 = x_2 = x_4 = T}$ and $x_3 = F$. 
  Note that~$K$ is an independent set, and every vertex in~$D'$ is either
  in~$K$ or has an arc to a vertex in~$K$. 
  Thus~$K$ is a special kernel in~$D'$. 

  Now, suppose~$K$ is a special kernel in~$D'$.
  Consider the truth assignment that makes true exactly the literals that are
  labels of literal vertices in~$K$.  
  This is well-defined because there is exactly one vertex in~$K$ in each
  variable digon.  
  We must argue that this is a NAE assignment for~$\phi$.
  In each clause triangle, there is exactly one vertex in~$K$.
  Because~$K$ is independent, the literal that labels this vertex is false in
  the assignment, and this assignment does not satisfy all three of the
  literals in each clause.
  On the other hand, for each clause triangle, one of the two vertices not
  in~$K$ is not an in-neighbor of the vertex in~$K$ in this triangle.
  As~$K$ is a kernel, this vertex has to be an in-neighbor of the literal
  vertex with the same label, which means the clause is satisfied. 
  That is, this assignment is a NAE assignment for~$\phi$. 

  Now we prove that~$D'$ has a special kernel if and only if~$F_2(D)$ has a
  kernel.  
  We start by arguing that if~$F_2(D)$ has a kernel, then $D'$ has a special kernel.
  First notice that the token graph~$F_2(D)$ has two parts.
  One of them is isomorphic to~$F_1(D')$, which in turn is isomorphic to~$D'$
  itself, and corresponds to the 2-token configurations in~$D$ that have one
  token always in~$u$.
  The second part corresponds to 2-token configurations with the two tokens
  being in~$D'$.
  Note that all the arcs between the first and the second part go from the
  second part to the first one, because~$u$ is a sink.
  Hence, any kernel~$K$ of~$F_2(D)$ induces a kernel~$K'$ in~$D'$, 
  namely, take $K' = \{x \in V(D'): \{u,x\} \in K\}$.  
  Let us argue that~$K'$ is a special kernel in~$D'$. 
  To ease the exposition, let us refer to a 2-token configuration in~$D$ as one
  of the following types, depending on where the two tokens are: a $uL$, $uC$,
  $LL$, $LC$, or a $CC$ configuration. 

  Suppose, by means of a contradiction, that~$K'$ is not special.  
  This means that there is a triangle clause~$\triangle$ with no vertex
  in~$K'$. 
  Because~$K'$ is a kernel in~$D'$, from each vertex~$y$ in~$\triangle$, the
  arc from~$y$ to~$L$ goes to a vertex in~$K'$.  
  Let~$x$ be an arbitrary variable, and consider the 2-token configurations
  with one token in the variable digon associated with~$x$ and the other
  in~$\triangle$.  
  Call~$Z$ this set of~$LC$ configurations.  
  Let us argue that no neighbor of configuration in~$Z$ outside~$Z$ are in~$K$. 
  There are arcs from~$LC$ configurations to~$uC$, $LL$, and~$LC$
  configurations. 
  The~$uC$ configurations that receive arcs from configurations in~$Z$ are not
  in~$K$, because they consist of~$u$ and a vertex in~$C \setminus K'$.
  The~$LL$ configurations that receive arcs from configurations in~$Z$ are also
  not in~$K$, because at least one of the two vertices in~$L$ is in~$K'$. 
  Moreover, if there is an arc from a configuration in~$Z$ to an~$LC$
  configuration, the latter is also in~$Z$. 
  Hence, $K$ must contain a kernel of the subdigraph of~$F_2(D)$ induced
  by~$Z$.
  See Figure~\ref{fig:zica} for an example of such subdigraph.
  However, one can check that this digraph has no kernel. 

\begin{figure}[h!]
  \centering
  \resizebox{0.28\textwidth}{!}{\includegraphics{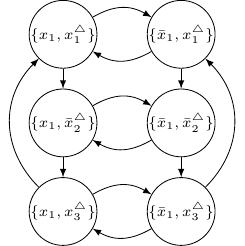}}
  \caption{The subdigraph of~$F_2(D)$ for~$D$ from Figure~\ref{fig:reduction},
  with~$\triangle$ being the triangle for clause $x_1 \vee \bar{x}_2 \vee x_3$
  and $x = x_1$.
  We name $x^\triangle_1,\bar{x}^\triangle_2,x^\triangle_3$ the vertices in
  $\triangle$ whose labels are $x_1,\bar{x}_2,x_3$ respectively.}
  \label{fig:zica}
\end{figure}

  It remains to show that if~$D'$ has a special kernel, then~$F_2(D)$ has a kernel.
  Let~$K'$ be a special kernel in~$D'$.
  There is exactly one vertex from~$K'$, say~$y$, in each clause
  triangle~$\triangle$.
  The vertex not in~$K'$ in~$\triangle$ that is an in-neighbor of~$y$ is called
  the \emph{dominating vertex} in~$\triangle$ while the other vertex
  in~$\triangle$ not in~$K'$ is called \emph{undominating vertex}. 
  Note that the undominating vertex is an in-neighbor of a literal vertex in~$K$.
  Let us denote by $C \subseteq V(D)$ the set of clause vertices and by $L
  \subseteq V(D)$ the set of literal vertices.

  Let us now describe a kernel in~$F_2(D)$ from~$K'$.
  Let $S_1 = \{\{u,\textcolor{red}{v}\} \colon \textcolor{red}{v} \in K'\}$. 
  This is an independent set, because~$K'$ is an independent set in $D'$.
  Let $S_2 = \{\{v,v'\} \colon v,v' \in L$ and $v,v' \notin K'\}$.
  Because~$K'$ is special, there is a vertex in~$K'$ in each variable digon. 
  Thus, as there is no arc between variable digons, the 2-token configurations
  in~$S_2$ form an independent set. 
  In fact, $S_1 \cup S_2$ is also an independent set, because the tokens in
  configurations of~$S_2$ are in vertices not in~$K'$, while the tokens in
  literal vertices in configurations of~$S_1$ are in vertices from~$K'$. 
  Let $S_3 = \{\{v,y\} \colon$ either $\textcolor{red}{v} \in K' \cap L$ and
  $\textcolor{blue}{y} \in C$ is a dominating vertex, or $v \in L \setminus K'$
  and $y \in C$ is an undominating vertex$\}$.
  Clearly, $S_3$ is an independent set.
  Also, because a token in~$C$ cannot move to~$u$, and a token in an
  undominating vertex can only move to a vertex in~$L$ that is in~$K'$, 
  we can see that $S_1 \cup S_2 \cup S_3$ is also an independent set.
  At last, let $S_4 = \{\{y,z\} \colon y,z \in C$ and either
  $\textcolor{red}{y},\textcolor{red}{z} \in K'$, or
  $\textcolor{blue}{y},\textcolor{blue}{z}$ are both dominating vertices not
  neighboring a vertex in $L \cap K'$, or $y,z$ are both undominating vertices,
  or both~$y$ and~$z$ are in the same clause triangle with $\textcolor{red}{y}
  \in K'$ and~$\textcolor{blue}{z}$ being a dominating vertex$\}$.
  Clearly, $S_4$ is independent. 
  Also, we can see that $K = S_1 \cup S_2 \cup S_3 \cup S_4$ is also an
  independent set, because a token in an undominating vertex can only go to a
  vertex in~$L$ that is in~$K'$. 

  Finally, let us argue that~$K$ is a kernel of~$F_2(D)$. 
  Consider an arbitrary node~$S$ of~$F_2(D)$.
  If~${S \in K}$, there is nothing to prove, so we may assume $S \notin K$.
  If $u \in S$, then the second vertex in~$S$ is not in~$K'$, otherwise~$S$
  would be in~$S_1$, so there is an arc from~$S$ to a configuration in~$S_1$,
  because~$K'$ is a kernel in~$D'$. 
  If $u \notin S$, then there are three cases.
  In the first case, $S$ contains only literal vertices.
  Then, because $S \not\in S_2$, there is a vertex from~$K'$ in~$S$, and hence
  there is an arc from~$S$ to a configuration in~$S_1$.
  In the second case, $S = \{v,y\}$ contains a literal vertex~$v$ and a clause
  vertex~$y$.
  If $y \in K'$, then there is an arc from~$S$ to the configuration $\{u,y\}$
  in~$S_1$.
  Thus either~$\textcolor{blue}{y}$ is a dominating vertex and~$v \notin K'$,
  or~$y$ is an undominating vertex and~$\textcolor{red}{v} \in K'$
  (otherwise~$S$ would be in~$S_3$), and either way there is an arc from~$S$ to
  a configuration in~$S_3$ (obtained by moving the token in~$L$ to the other
  literal vertex in the same digon).
  In the last case, $S = \{y,z\}$ contains only clause vertices.
  If $\{y, z\} \cap K' = \emptyset$, then one of them is dominating and the
  other (dominating or undominating) has a neighbor in~$X \cap K'$, which means
  there is an arc from~$S$ to a configuration in~$S_3$. 
  Otherwise we may assume~$\textcolor{red}{y} \in K'$, and
  either~$\textcolor{blue}{z}$ is the dominating vertex in another clause
  triangle, or~$z$ is undominating, and there is always an arc from~$S$
  to~$S_4$.
  Indeed, in the former case, we can move the token in~$\textcolor{blue}{z}$ to
  the vertex in~$K'$ in the same triangle and, in the latter case, if
  both~$\textcolor{red}{y}$ and~$z$ are in the same triangle, then we can move
  the token in~$z$ to the dominating vertex in the same triangle, and
  if~$\textcolor{red}{y}$ and~$z$ are in different clause triangles, we can
  move the token from~$\textcolor{red}{y}$ to the undominating vertex in the
  same triangle. 
\hfill$\qed$
\end{proof}


\section{Unilateral digraphs and their token digraphs}
\label{sec:uni}

A digraph~$D$ is \defi{unilateral} if, for every pair of vertices~$x$ and~$y$,
there is an $xy$-path or a $yx$-path (or both).
Note that a digraph can be weakly connected without being unilateral (take the
antipath for instance).
We use the next theorem to characterize when~$F_k(D)$ is unilateral.

\begin{theorem}[Theorem 7.2 in~\cite{Foulds1991}]
\label{thm:Hamilton}
    A digraph~$D$ is unilateral if and only if the condensation
    digraph~$\CD(D)$ has a Hamiltonian path. 
\end{theorem}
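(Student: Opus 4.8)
The plan is to reduce the claim to a statement about dags by passing to the condensation digraph, and then to characterize unilateral dags through their reachability order. First I would note that $D$ is unilateral if and only if $\CD(D)$ is unilateral: if $x$ lies in the strongly connected component $C_i$ and $y$ lies in $C_j$, then there is an $xy$-path in $D$ precisely when $i=j$ (in which case a $yx$-path also exists, since a strongly connected component is strongly connected) or when there is a $C_iC_j$-path in $\CD(D)$, and conversely an arc of $\CD(D)$ lifts back to an arc of $D$. Hence the defining condition of unilaterality---for every ordered pair of vertices at least one of the two paths exists---holds for $D$ if and only if it holds for $\CD(D)$. Since $\CD(D)$ is a dag, it now suffices to prove that a dag $H$ is unilateral if and only if $H$ has a Hamiltonian path.

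The backward direction is immediate: if $(v_1,\ldots,v_n)$ is a Hamiltonian path of $H$, then for any $i<j$ the subpath from $v_i$ to $v_j$ witnesses a $v_iv_j$-path, so $H$ is unilateral.

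For the forward direction, I would assume $H$ is a unilateral dag and define a relation $\preceq$ on $V(H)$ by letting $u \preceq v$ whenever there is a $uv$-path in $H$ (with the trivial path allowed, so that $u \preceq u$). Acyclicity makes $\preceq$ antisymmetric, since a $uv$-path together with a $vu$-path with $u \neq v$ would contain an oriented cycle; it is plainly reflexive and transitive; so $\preceq$ is a partial order, and unilaterality makes every two vertices comparable, so $\preceq$ is a linear order, say $v_1 \prec v_2 \prec \cdots \prec v_n$. Then I would check that $(v_1,\ldots,v_n)$ is itself a Hamiltonian path. If some $(v_i,v_{i+1})$ were not an arc of $H$, then, since $v_i \prec v_{i+1}$ forces the existence of a $v_iv_{i+1}$-path, that path would have length at least two, and its first internal vertex $w$---which is distinct from both $v_i$ and $v_{i+1}$ because an oriented path repeats no vertex---would satisfy $v_i \prec w$ and $w \prec v_{i+1}$; but nothing lies strictly between $v_i$ and $v_{i+1}$ in the linear order, a contradiction. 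Hence every $(v_i,v_{i+1})$ is an arc and $(v_1,\ldots,v_n)$ is a Hamiltonian path of $H$.

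I do not expect a genuine obstacle here; the one step that deserves care is the last one, where acyclicity (to exclude a spurious two-way reachability that would break antisymmetry, and so to guarantee that the reachability relation really is a linear order) and the convention that oriented paths repeat no vertex (to place $w$ strictly between $v_i$ and $v_{i+1}$) are both essential. The passage to $\CD(D)$ is routine given the machinery already set up in Section~\ref{sec:strong_connectivity}.
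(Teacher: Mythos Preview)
The paper does not prove this statement: Theorem~\ref{thm:Hamilton} is quoted from~\cite{Foulds1991} and used as a black box, so there is no in-paper argument to compare against. Your proof is correct and is the standard one; the reduction to $\CD(D)$ and the observation that reachability on a unilateral dag is a linear order whose consecutive elements must be joined by arcs are exactly the expected steps, and you have handled the antisymmetry and the ``no vertex strictly between $v_i$ and $v_{i+1}$'' point with appropriate care.
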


Because~$\CD(D)$ is a dag for every digraph~$D$, the Hamiltonian path of a
unilateral digraph given by Theorem~\ref{thm:Hamilton} is unique. 

Obviously, as~$F_1(D)$ is isomorphic to~$D$, if~$D$ is unilateral, then so
is~$F_1(D)$. 
Moreover, $F_1(D)$ is isomorphic to~$F_{n-1}(D)$, where~$n$ is the number of
vertices in~$D$. 
Hence the next theorem addresses the remaining cases, that is, $2 \leq k \leq n
- 2$.

\begin{theorem}
    Let~$D$ be an $n$-vertex digraph with $C_1,\ldots,C_t$ being its strongly
    connected components, and let~$k$ be such that $2 \leq k \leq n - 2$.
    Then~$F_k(D)$ is unilateral if and only if~$D$ is unilateral and either $t
    \leq 2$ or $t=3$ with $|C_2| = 1$.
\end{theorem}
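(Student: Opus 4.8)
The plan is to push everything through the condensation digraph. By Theorem~\ref{thm:charact_condensation_token_digraphs}, $\CD(F_k(D))$ is isomorphic to the dag $M$ on vertex set $V_k(c_1,\dots,c_t)$, with $c_i=|C_i|$, whose arcs are exactly the $(i,j)$-moves realized by arcs of $\CD(D)$; and by Theorem~\ref{thm:Hamilton}, $F_k(D)$ is unilateral if and only if $M$ has a Hamiltonian path. I would first record an elementary fact about dags: \emph{a dag has a Hamiltonian path if and only if every two of its vertices are comparable under reachability}. One direction is immediate; for the other, if reachability is a total order $v_1\prec\cdots\prec v_N$, then every directed path from $v_i$ to $v_{i+1}$ has all of its vertices squeezed between $v_i$ and $v_{i+1}$, hence consists of the single arc $v_iv_{i+1}$, so $v_1v_2\cdots v_N$ is a Hamiltonian path. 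Thus the statement reduces to deciding when any two vectors of $V_k(c_1,\dots,c_t)$ are comparable in $M$. I will only need the easy half of a flow-type criterion: if $U$ is a set of strongly connected components of $D$ that no arc of $\CD(D)$ leaves, then the number of tokens inside $U$ is nondecreasing along every move, so if $\mathbf k$ reaches $\mathbf k'$ in $M$ then $\sum_{C_i\in U}k_i\le\sum_{C_i\in U}k'_i$ for every such $U$.

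For the necessity of ``$D$ unilateral'': if $D$ is not unilateral, then $\CD(D)$ has no Hamiltonian path, so by the fact above there are strongly connected components $C_a,C_b$ with neither reaching the other. Since $1\le k\le n-1$, I can pick a vector $\mathbf k\in V_k(c_1,\dots,c_t)$ with $k_a\ge 1$ and $k_b\le c_b-1$, and let $\mathbf k'$ be obtained from $\mathbf k$ by transferring one token from coordinate $a$ to coordinate $b$. Applying the criterion with $U$ the set of components reachable from $C_a$ (which omits $C_b$) shows $\mathbf k$ does not reach $\mathbf k'$, and with $U$ the set of components reachable from $C_b$ (which omits $C_a$) shows $\mathbf k'$ does not reach $\mathbf k$; hence $M$ has two incomparable vertices, so $F_k(D)$ is not unilateral.

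Now suppose $D$ is unilateral. Then $\CD(D)$ has a Hamiltonian path; since a Hamiltonian path forces the topological order of a dag to be unique, it must be $C_1\to C_2\to\cdots\to C_t$, so each $C_iC_{i+1}$ is an arc of $\CD(D)$, the sets not left by any arc are exactly the suffixes $\{C_m,\dots,C_t\}$, and the criterion specializes to: $\mathbf k$ reaches $\mathbf k'$ in $M$ only if $s_\ell(\mathbf k)\ge s_\ell(\mathbf k')$ for all $\ell$, where $s_\ell(\mathbf k)=k_1+\cdots+k_\ell$. For the two positive cases I would exhibit Hamiltonian paths of $M$ directly, using that every single $(i,i+1)$-move is an arc of $M$. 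If $t\le 2$, $M$ is a directed path: a single vertex when $t=1$, and for $t=2$ the vectors listed by decreasing $k_1$, with consecutive ones differing by a $(1,2)$-move. If $t=3$ and $c_2=1$, I order $V_k(c_1,1,c_3)$ by the quantity $k_2+2k_3$; since $k_2\in\{0,1\}$ this quantity is injective, its attained values form an integer interval with no gaps, and passing from one value to the next is a $(1,2)$-move (out of a vector with $k_2=0$) or a $(2,3)$-move (out of a vector with $k_2=1$), in either case an arc of $M$, so this zig-zag is a Hamiltonian path of $M$.

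It remains to rule out the remaining two sub-cases, still with $D$ unilateral. If $t\ge 4$, using $2\le k\le n-2$ I pick $\mathbf k\in V_k$ with $k_2\ge 1$, $k_3\ge 1$, $k_1\le c_1-1$ and $k_t\le c_t-1$, and let $\mathbf k'$ be obtained from $\mathbf k$ by adding a token to each of coordinates $1$ and $t$ and removing one from each of coordinates $2$ and $3$; then $s_1(\mathbf k)<s_1(\mathbf k')$ but $s_{t-1}(\mathbf k)>s_{t-1}(\mathbf k')$, so $\mathbf k$ and $\mathbf k'$ are incomparable. If $t=3$ and $c_2\ge 2$, similarly I pick $\mathbf k\in V_k$ with $k_2\ge 2$, $k_1\le c_1-1$ and $k_3\le c_3-1$, and let $\mathbf k'$ add a token to each of coordinates $1$ and $3$ and remove two from coordinate $2$; then $s_1(\mathbf k)<s_1(\mathbf k')$ while $s_2(\mathbf k)>s_2(\mathbf k')$. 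In both cases $M$ has incomparable vertices and hence no Hamiltonian path, so $F_k(D)$ is not unilateral; combining all the cases gives the equivalence. I expect the delicate point to be the positive case $t=3,\ c_2=1$: one must confirm that $k_2+2k_3$ is genuinely injective on $V_k(c_1,1,c_3)$ and that its range is gap-free (so the zig-zag visits every vector), and that consecutive vectors are joined by an actual arc of $M$, not merely by a directed path. The hypothesis $2\le k\le n-2$ is used exactly to make the incomparable vectors in the last paragraph exist — the minimum and maximum token totals compatible with the stated coordinate constraints being $2$ and $n-2$ — which is where the bound on $k$ is genuinely needed.
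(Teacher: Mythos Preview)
Your proof is correct and follows the same overall strategy as the paper: reduce to $\CD(F_k(D))$ via Theorem~\ref{thm:charact_condensation_token_digraphs}, then for the positive cases exhibit a Hamiltonian path in that dag, and for the negative cases exhibit two incomparable vertices. The differences are purely in packaging. You work uniformly at the level of the vector dag $M$ and isolate two reusable tools --- the fact that a dag has a Hamiltonian path if and only if reachability is a total order, and the partial-sum monotonicity coming from successor-closed sets of components --- whereas the paper argues each negative case by constructing explicit $k$-token configurations $A$ and $B$ in $F_k(D)$ and reasoning directly about which way tokens can move. Your parameterization of the $t=3$, $c_2=1$ Hamiltonian path by $k_2+2k_3$ is exactly the zig-zag the paper writes out as an explicit sequence, and your incomparable pairs in the $t\ge 4$ and $t=3$, $c_2\ge 2$ cases are essentially the paper's pairs with the roles of the two vectors swapped. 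The one point worth expanding slightly in a write-up is the ``gap-free'' claim for $k_2+2k_3$: it is true, and your description of the two move types is the proof, but it deserves the two lines of case-checking (from $k_2=0$ the next vector has $k_2=1$ and the same $k_3$; from $k_2=1$ the next has $k_2=0$ and $k_3$ increased by one, and the boundary cases are the genuine endpoints).
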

\begin{proof}
    Let us first show that if~$D$ is unilateral and either $t \leq 2$ or $t=3$
    with $|C_2| = 1$, then $F_k(D)$ is unilateral.
    Since~$D$ is unilateral, let $C_1,\ldots,C_t$ be the strongly connected
    components of~$D$ in the order given by the Hamiltonian path of
    Theorem~\ref{thm:Hamilton}.

    We start by considering the case $t \leq 2$.
    If $t = 1$, then~$D$ is strongly connected and so it is~$F_k(D)$ by
    Corollary~\ref{cor:strongcomp}, and hence~$F_k(D)$ is unilateral. 
    For $t = 2$, according to
    Theorem~\ref{thm:charact_condensation_token_digraphs}, the vertices
    of~$\CD(F_k(D))$ are pairs $(k_1,k_2)$, for $0 \leq k_i \leq |C_i|$
    and~${k_1 + k_2 = k}$. 
    Let $k_1 = \min\{k,|C_1|\}$ and $k_2 = \min\{k,|C_2|\}$. 
    The following is a Hamiltonian path in~$\CD(F_k(D))$:
    \begin{equation*}
        ((k_1,k-k_1), (k_1-1,k-k_1+1), (k_1-2,k-k_1+2), \ldots, (k-k_2,k_2)) \, .
    \end{equation*}
    Therefore, by Theorem~\ref{thm:Hamilton}, $F_k(D)$ is unilateral.
    
    Now, suppose that~$D$ is unilateral, $t=3$ and $|C_2| = 1$. 
    If $k \leq |C_1|$, then the following is a Hamiltonian path
    in~$\CD(F_k(D))$:
    \begin{equation*}
        ((k,0,0), (k-1,1,0), (k-1,0,1), (k-2,1,1), (k-2,0,2), (k-3,1,2), (k-3,0,3), \ldots, v) \, ,
    \end{equation*}
    where the last vertex~$v$ is either $(0,0,k)$ if $k \leq |C_3|$, or
    $(k-|C_3|-1,1,|C_3|)$ otherwise.
    If $k > |C_1|$, then the following is a Hamiltonian path in~$\CD(F_k(D))$:
    \begin{equation*}
        ((|C_1|,1,k-|C_1|-1), (|C_1|,0,k-|C_1|), (|C_1|-1,1,k-|C_1|), (|C_1|-1,0,k-|C_1|+1), \ldots, v) \, ,
    \end{equation*}
    where again~$v$ is either $(0,0,k)$ if $k \leq |C_3|$, or
    $(k-|C_3|-1,1,|C_3|)$ otherwise.
    
    For the other direction, we prove the contrapositive, that is, we show that
    if~$D$ is not unilateral or $t \geq 4$ or $t=3$ with $|C_2| \geq 2$, then
    $F_k(D)$ is not unilateral. 

    Suppose that $t \geq 4$ or $t=3$ with $|C_2| \geq 2$.
    Let $C' = C_2 \cup \cdots \cup C_{t-1}$, so $|C'| \geq 2$.  
    Take~$A$ to be an arbitrary~$k$-token configuration of~$D$ such that $|A
    \cap C_1| \geq 1$, $|C' \setminus A| \geq 2$, and $|A \cap C_t| \geq 1$. 
    Such a set exists because $k \geq 2$, which assures we can select~$A$ with
    $|A \cap C_1| \geq 1$ and $|A \cap C_t| \geq 1$, and because $|C'| \geq 2$
    and $k \leq n-2 = |C_1| + |C'| + |C_t| - 2$, which assures we can also
    choose~$A$ that does not contain at least two vertices~$w$ and~$z$ in~$C'$.
    Let~$x$ and~$y$ be vertices in $A \cap C_1$ and~${A \cap C_t}$,
    respectively. 
    Let~$B$ be the $k$-token configuration $A \setminus\{x,y\} \cup \{w,z\}$. 
    Then, there is no path from~$A$ to~$B$ in~$F_k(D)$ because there is no way
    to move tokens from~$A$ to a configuration with less vertices in~$C_t$,
    such as~$B$. 
    Also, there is no path from~$B$ to~$A$ because there is no way to move
    tokens from~$B$ to a configuration with more vertices in~$C_1$, such
    as~$A$. 
    Therefore, $F_k(D)$ is not unilateral.
    
    Suppose now that~$D$ is not unilateral.
    Then clearly $t \geq 3$.
    By the previous case, we may assume that $t = 3$ and~$|C_2| = 1$. 
    As we observed in Section~\ref{sec:strong_connectivity}, $D$ is weakly
    connected if and only if~$F_k(D)$ is weakly connected, so we may assume
    that~$D$ is weakly connected. 
    Let~$y$ be the unique vertex in~$C_2$. 
    Since~$D$ is not unilateral, $\CD(D)$ has exactly two arcs, which
    are~${\{(C_1,C_3),(C_2,C_3)\}}$ or~${\{(C_1,C_2),(C_1,C_3)\}}$.
    Note that if $\CD(D)$ has the arcs $\{(C_1,C_3),(C_2,C_3)\}$, then
    $\CD(\larrow{D})$ has the arcs $\{(C_1,C_2),(C_1,C_3)\}$, and vice-versa. 
    So, given that~$D$ is unilateral if and only if~$\larrow{D}$ is unilateral,
    we may assume the former case. 
    Now, take a configuration~$A$ with $y \notin A$ and~$|A \cap C_1| \geq 1$.
    Let $x \in A \cap C_1$, and let $B := (A \setminus \{x\}) \cup \{y\}$.
    Observe that there is no path from~$A$ to~$B$ because no token at vertices
    in~$A$ can be moved to the vertex $y \in C_2$. 
    Further, there is no path from~$B$ to~$A$ because no token at vertices in
    $B \cap (C_2 \cup C_3)$ can be moved to a vertex in~$C_1$.
    Hence, $F_k(D)$ is not unilateral.
\hfill$\qed$
\end{proof}

\section{Girth and circumference}
\label{sec:girth}

The \defi{oriented girth} of a digraph~$D$ is the length of the shortest
oriented cycle in~$D$, and it is denoted by~$g(D)$.
The \defi{oriented circumference} of~$D$ is the length of the longest oriented
cycle in~$D$, and it is denoted by~$c(D)$. 

\begin{theorem}
\label{thm:girth_circumf}
  For every digraph~$D$ and for each $k \in \{1,2 \ldots, |D|\}$, $g(D) =
  g(F_k(D))$ and $c(D) \leq c(F_k(D))$. 
\end{theorem}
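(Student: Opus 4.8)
The plan is to prove the two assertions separately. For the girth equality $g(D) = g(F_k(D))$, I would first observe that $F_1(D)$ is isomorphic to $D$, so the claim holds for $k=1$. For general $k$, I would establish the two inequalities $g(F_k(D)) \le g(D)$ and $g(F_k(D)) \ge g(D)$. The easy direction is $g(F_k(D)) \le g(D)$: take a shortest oriented cycle $(v_1, v_2, \ldots, v_g, v_1)$ in $D$ and a $(k-1)$-subset $W$ of $V(D) \setminus \{v_1, \ldots, v_g\}$ (which exists since $k \le |D|$; if $k-1 > |D| - g$ one needs a tiny bit more care, but in fact one can always pick $W$ disjoint from at least the two vertices being moved, and reuse the others — I will need to check this corner case, but with $g \ge 2$ and $k \le n$ it works by letting the "parked" tokens occupy $v_3, \ldots, v_g$ together with an arbitrary completion). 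Sliding a single token around the cycle $v_1 \to v_2 \to \cdots \to v_g \to v_1$ while the other $k-1$ tokens stay put yields an oriented cycle of length $g$ in $F_k(D)$, so $g(F_k(D)) \le g(D)$.

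For the reverse inequality $g(F_k(D)) \ge g(D)$, I would take a shortest oriented cycle $\mathcal{C} = (A_0, A_1, \ldots, A_{\ell-1}, A_0)$ in $F_k(D)$ with $\ell = g(F_k(D))$, and show $\ell \ge g(D)$. Each arc $A_iA_{i+1}$ corresponds to moving one token along an arc $(a_i, b_i)$ of $D$. The key bookkeeping fact is that the multiset of tokens must return to its starting configuration after $\ell$ steps; concretely, for each individual token, the sequence of vertices it occupies over the course of the cycle, restricted to the steps where it actually moves, forms a closed walk in $D$. Since $\mathcal{C}$ is a cycle in $F_k(D)$ (all $A_i$ distinct), at least one token must genuinely move, and tracking that token gives a closed walk in $D$ of length at most $\ell$; such a closed walk contains an oriented cycle of $D$, hence $g(D) \le \ell$. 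I expect the subtlety here to be ensuring that the closed walk of a moving token is nontrivial and extracting a genuine oriented cycle of length $\le \ell$ — one has to argue that if every moving token's closed walk were "trivial" (length zero) the $A_i$ couldn't all be distinct, which is the main obstacle of the argument.

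For the circumference inequality $c(D) \le c(F_k(D))$, the same construction as in the easy girth direction works: take a longest oriented cycle of $D$, of length $c(D)$, park $k-1$ tokens on vertices disjoint from it (again handling the corner case where not enough vertices remain by the same device), and slide one token around it to obtain an oriented cycle of length $c(D)$ in $F_k(D)$; hence $c(F_k(D)) \ge c(D)$. The main obstacle for the whole theorem is the lower bound $g(F_k(D)) \ge g(D)$, specifically the clean extraction of an oriented cycle of $D$ from the "moving token" closed walk without losing length — I would want to phrase it as: pick any token that moves at least once, follow its trajectory to get a closed walk $W$ in $D$ of length equal to the number of steps at which it moves (which is $\le \ell$ and $\ge 1$), and if $|W| = 1$ derive a contradiction with all $A_i$ distinct by a short parity/return argument, otherwise $W$ contains an oriented cycle of $D$ of length $\le |W| \le \ell$.
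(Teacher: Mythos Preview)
Your proposal has the right overall shape, but both directions contain real gaps.

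For the upper bound $g(F_k(D)) \le g(D)$ (and likewise $c(F_k(D)) \ge c(D)$), your corner-case fix does not work. If you park tokens at $v_3,\ldots,v_g$ and then try to slide a single token along $v_1 \to v_2 \to \cdots \to v_g \to v_1$, the moving token is blocked already at~$v_3$. When $k-1 > |V(D)| - t$ (for instance when $D$ is a directed $n$-cycle and $k=n-1$) one is forced to place several tokens on the cycle~$C$, and then a single-token slide around~$C$ is impossible. The paper's proof instead takes any $k$-token configuration~$A$ with $1 \le |A \cap C| \le t-1$ and moves the tokens lying on~$C$ around in a coordinated fashion so as to return to~$A$ after exactly~$t$ steps; this is where the actual content of the ``easy'' direction lies.

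For the lower bound $g(F_k(D)) \ge g(D)$, your central claim---that each individual token's trajectory is a closed walk in~$D$---is false. Tokens are indistinguishable and can permute among themselves while the configuration, as a set, returns to where it started. Concretely, take the directed triangle with arcs $(a,c),(c,b),(b,a)$ and the length-$3$ cycle $\{a,b\}\to\{b,c\}\to\{a,c\}\to\{a,b\}$ in $F_2(D)$: labelling the two tokens, one travels $a\to c\to b$ and the other $b\to a$, and neither walk is closed. So your worry about the case $|W|=1$ is misdirected; the obstacle is not that the walk might be too short, it is that it need not be closed at all. The paper sidesteps token-tracking entirely: it forms the subdigraph~$D'$ of~$D$ whose arcs are exactly the moves used along the $F_k(D)$-cycle and observes that, since $A_0=A_t$, every vertex is entered exactly as many times as it is left, so every non-isolated vertex of~$D'$ has out-degree at least one. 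Such a digraph contains an oriented cycle, and that cycle has at most~$t$ arcs since~$D'$ does.
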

\begin{proof}
  Let~$C$ be an oriented cycle in~$D$ of length~$t$. 
  Let~$A$ be a $k$-token configuration in~$D$ with at least one and at
  most~$t-1$ tokens in~$C$. 
  Such a configuration exists because $1 \leq k \leq |V(D)|-1$. 
  We can move the tokens in~$C$ around to obtain an oriented cycle in~$F_k(D)$
  of length exactly the length of~$C$, that is, $t$.
  This implies that $g(D) \geq g(F_k(D))$ and $c(D) \leq c(F_k(D))$.

  Now, let~$C$ be an oriented cycle in~$F_k(D)$ of length~$t$. 
  Let us argue that there is a corresponding oriented cycle~$C'$ in~$D$ of
  length at most the length of~$C$. 
  Let $C = (A_0,\ldots,A_t)$, where $A_0 = A_t$.  
  Let $U = \bigcup_i A_i$ and let~$D'$ be the subdigraph of~$D$ with vertex
  set~$U$ and with an arc from~$u$ to~$v$ if and only if $A_{i-1} \setminus A_i
  =\{u\}$ and $A_i \setminus A_{i-1} =\{v\}$ for some $1 \leq i \leq t$.
  Note that $|A(D')| = t$.

  While traversing~$C$, each of the~$k$ tokens traverses an oriented path from
  a vertex in~$A_0$ back to a vertex in~$A_0$ (the same vertex or another one). 
  Hence every vertex of~$D'$ is either isolated or has outdegree at least one,
  and there is at least one vertex with outdegree at least one because $t \geq 1$.
  Throw away the isolated vertices.
  What remains is a digraph all of whose vertices have outdegree at least one.
  Every such digraph has a cycle. 
  Clearly this cycle has length at most~$t$, as~$D'$ has exactly~$t$ arcs.
  For $t = g(F_k(D))$, this means that $g(D) \leq g(D') \leq t = g(F_k(D))$,
  which allows us to conclude that $g(D) = g(F_k(D))$. 
  \hfill$\qed$
\end{proof}

Now we consider the oriented circumference of token digraphs.
Since~$F_1(D)$ is isomorphic to~$D$ and~$F_{n-1}(D)$ is isomorphic
to~$\larrow{D}$, we have that $c(F_1(D)) = c(F_{n-1}(D)) = c(D)$. 
On the other hand, when $2 \leq k \leq n-2$, there exist many digraphs~$D$ such
that $c(D) < c(F_k(D))$. 
Indeed, $F_k(D)$ is much larger than~$D$ and, for instance, $F_k(\dvec{K_n})$
is a Hamiltonian digraph, and so $c(F_k(\dvec{K_n})) > c(\dvec{K_n})$ as long
as $n \geq 4$ and $2 \leq k \leq n-2$.

Also, there exist non-Hamiltonian graphs whose $k$-token graphs are Hamiltonian
(see~\cite{AdameRT2021}), and we can consider the digraph~$D$ obtained
from such graphs by replacing each edge by the two possible arcs.
It is straightforward to see that~$F_k(D)$ is Hamiltonian, so the same holds
for these digraphs.

\begin{theorem}
  Let~$D$ be a digraph on~$n$ vertices with $c(D) \geq 5$ and $2 \leq k \leq
  n-3$.
  Let $r = 2$ if $k = 2$ and $r = \min\{\max\{k, n-k\}, c(D)-3\}$ otherwise. 
  Then $c(F_k(D)) \ge r \, c(D)$.
\end{theorem}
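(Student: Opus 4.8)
The plan is to take a longest oriented cycle $C = (v_0, v_1, \ldots, v_{\ell-1}, v_0)$ in $D$, where $\ell = c(D)$, and build a long oriented cycle in $F_k(D)$ by combining two ingredients: a ``rotation'' of tokens sitting on $C$, and a ``transport'' of some auxiliary tokens around $C$ so that they each make a full loop before returning. Concretely, I would fix a token configuration that places $\min\{k,\ell\}$ tokens (or as many as room allows) on consecutive vertices of $C$, and then repeatedly apply the basic move from Theorem~\ref{thm:girth_circumf}: sliding the tokens one step around $C$ returns to a permuted-but-equal configuration after $\ell$ steps, giving one ``lap'' of length $\ell$. The point of the parameter $r$ is that we can chain together $r$ such laps if we arrange for $r$ distinct tokens (or token-slots) to take turns making a lap, so that the overall walk in $F_k(D)$ does not close up until all $r$ of them have cycled through. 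When $k=2$ we only have enough tokens for $r=2$ laps; otherwise the larger of $k$ and $n-k$ controls how many tokens (respectively, how many empty slots, using Property~\ref{property1}) we can shuttle around $C$, and $c(D)-3$ caps how much ``slack'' the cycle itself can absorb while still letting a token leave and re-enter without colliding.

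**Key steps in order.** First I would handle the symmetric reduction: by Property~\ref{property1}, $F_k(D) \cong F_{n-k}(\larrow D)$ and $c(\larrow D) = c(D)$, so it suffices to prove the bound with $k$ replaced by $\max\{k, n-k\}$; hence assume $2k \le n$ is not required but we may work with whichever of $k, n-k$ is larger, call it $m$. Second, I would set $r' = \min\{m, c(D)-3\}$ (and $r'=2$ in the $k=2$ case) and describe an explicit cyclic sequence of configurations: pick $r'$ ``mobile'' tokens $t_1, \ldots, t_{r'}$ parked just off a designated arc of $C$, and a block of $\ell - (\text{something})$ ``carrier'' tokens on $C$; during lap $i$, token $t_i$ enters $C$, rides around all of $C$ while the carriers shift to make room, and exits, after which the configuration is identical to the start of lap $i$ except that $t_i$ and $t_{i+1}$ have effectively swapped roles. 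Third, I would verify that after $r'$ laps every mobile token is back to its original vertex and every carrier is back, so the walk closes into a genuine oriented cycle, and that no configuration repeats in between — this is where the $c(D)\ge 5$ and $k \le n-3$ hypotheses get used, to guarantee there is always an unoccupied vertex to slide into and that the laps are genuinely disjoint as node-sequences. Finally, counting: each lap contributes $\ell$ arcs plus $O(1)$ arcs for the enter/exit maneuvers, and a careful bookkeeping (or a slightly cleverer routing that absorbs the enter/exit into the lap itself) yields total length $\ge r' \cdot \ell = r\, c(D)$.

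**Main obstacle.** The hard part will be making the enter/exit maneuvers ``free'' — i.e., ensuring each lap contributes \emph{exactly} $c(D)$ new arcs rather than $c(D) + O(1)$, since the statement asks for the clean bound $c(F_k(D)) \ge r\,c(D)$ with no additive loss. I expect this is handled by not treating enter/exit as separate moves at all: instead, one routes the mobile token $t_i$ so that its entry onto $C$ \emph{is} the first step of the lap and its exit \emph{is} simultaneous with $t_{i+1}$'s entry, so consecutive laps share their boundary move. Verifying that this sharing is consistent — that the shared configuration is well-defined and that the resulting long sequence has no internal repetition — requires tracking, for each configuration along the walk, the full multiset of token positions and checking injectivity; this is the bookkeeping-heavy core of the argument. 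A secondary subtlety is confirming the edge cases $k=2$ (where $r=2$ and the construction degenerates to exactly two laps with one mobile token making a single loop) and the boundary $k = n-3$ (where there is exactly one free vertex, so the routing has no slack and the order of moves is forced), both of which should be checked explicitly but pose no real difficulty once the general scheme is in place.
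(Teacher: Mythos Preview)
Your reduction via Property~\ref{property1} to the case $k = \max\{k,n-k\}$ (or $k=2$) is correct and matches the paper. But the construction you outline after that has a genuine gap, and it is not the construction the paper uses.

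The central problem is your reliance on tokens that are ``parked just off a designated arc of $C$'' and that ``enter'' and ``exit'' $C$. The digraph $D$ is arbitrary: the only structure you are given is a longest cycle $C$ of length $c(D)$. There is no guarantee whatsoever that vertices outside $C$ have arcs to or from convenient places on $C$, so a scheme in which mobile tokens shuttle on and off $C$ cannot be carried out in general. Even in the regime where all $k$ tokens would be mobile (your case $r'=k$), you would need $k$ off-cycle vertices with suitable arcs to $C$, which simply need not exist. A second, more conceptual issue: tokens are indistinguishable. If ``lap $i$'' consists of one token traversing $C$ while the others sit in fixed positions, then lap $i$ and lap $i+1$ visit \emph{identical} configurations, so concatenating them does not produce a longer cycle in $F_k(D)$; it just repeats the same cycle of length $c(D)$. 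Your attempt to fix this by having $t_i$ and $t_{i+1}$ ``swap roles'' does not change the underlying $k$-subsets.

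The paper's construction avoids both issues by never moving any token off $C$. It places $r$ tokens on $r$ \emph{consecutive} vertices of $C$ (and, when $r<k$, parks the remaining $k-r$ tokens on vertices outside $C$ where they never move; the hypothesis $k\le n-3$ guarantees enough such vertices). One then ``caterpillars'' the block forward: sliding the front token, then the next, and so on, shifts the whole block by one position in exactly $r$ moves. Doing this $c(D)$ times returns to the start, yielding a cycle of length $r\,c(D)$ in $F_k(D)$. The disjointness of intermediate configurations follows because each block-shift stays inside a window $X_i=\{i,\dots,i+k\}$ of size $r+1$, and the condition $r\le c(D)-3$ ensures non-adjacent windows overlap in fewer than $r$ vertices. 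Note the accounting is the transpose of yours: it is $c(D)$ phases of $r$ moves each, not $r$ laps of $c(D)$ moves each.
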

\begin{proof}
  As $c(D) = c(\larrow{D})$ and $F_k(D)$ is isomorphic to the token graph
  $F_{n-k}(\larrow{D})$, we can assume that either $k=2$ or~$k \geq n-k$.
  Let $c = c(D)$ and~$C$ be a cycle of length~$c$ in~$D$. 
  Assume without loss of generality that $V(C) = \{0,1,\dots,c-1\}$. 
  In what follows, sums and subtractions are taken $\bmod\;r$. 

  First suppose that $r = k \le c-3$. 
  For each $i \in V(C)$, let $X_i = \{i,i+1,\dots,i+k\}$.
  Clearly $|X_i \cap X_{i+1}| = k$.   
  Now, note that $|V(C) \setminus X_i| = c - (k+1) \geq 2$, which means that
  $|X_i \cap X_j| < k$ if $|j-i| \neq 1$, because~$i$ and~$i+1$ are not
  in~$X_j$ for $j = i+2,\ldots,i+k-1$ (as the last element in~$X_j$ in this
  case would be at most $i+k-1+k = i-1$), $i+1$ and~$i+k-1$ are not
  in~$X_{i+k}$ (as the last element in $X_{i+k}$ is~$i$), and~$i+k-1$ and~$i+k$
  are not in~$X_j$ for $j = i+k+1,\ldots,i-2$ (as the last element in~$X_j$ in
  this case would be at most $i+k-2$).  

  Let~$P_i$ be the directed path in~$F_k(D)$ from $X_{i-1} \cap X_i =
  \{i,i+1,\dots,i+k-1\}$ to $X_i \cap X_{i+1} = \{i+1,i+1,\dots,i+k\}$,
  obtained from moving one by one the token from~$j$ to~$j+1$, for $j =
  i+k-1,\ldots,i$, where the sum is taken $\bmod\; r$. 
  It is readily seen that~$P_i$ is a directed path in~$F_k(D)$ of length~$k$,
  and that its vertices correspond to $k$-token configurations contained
  in~$X_i$. 
  Because of this, as $|X_i \cap X_j| < k$ if $|j-i| \neq 1$, paths~$P_i$
  and~$P_j$ do not intersect if $|j-i| \neq 1$.
  Moreover, $P_i$ starts at the end of~$P_{i-1}$, hence we can concatenate
  $P_0,P_1,\ldots,P_{c-1}$ to obtain a cycle of length~$kc$ in~$F_k(D)$, which
  implies that $c(F_k(D)) \geq kc = rc$. 
 
  Now suppose that $r = c-3 < k$.
  As $k \leq n-3$, there are $k-r$ vertices outside~$C$. 
  Consider the subdigraph~$H$ of $F_k(D)$ generated by moving~$r$ tokens
  on~$V(C)$, whereas the remaining $k-r$ tokens are fixed outside~$C$. 
  Thus, $H \simeq F_r(C)$ and, then, by the first part of the proof, we deduce
  that there is a cycle in~$H$ of order~$rc$.
  This completes the proof.  
  \hfill$\qed$
\end{proof}

Cordero-Michel and Galeana-Sánchez~\cite{Cordero2019} proved the following.

\begin{theorem}[\cite{Cordero2019}]
  \label{thm:dichi_girth}
  Let~$D$ be a digraph with at least one cycle.
  Then $\dichi(D) \leq \left\lceil\dfrac{c(D)-1}{g(D)-1}\right\rceil + 1$.
\end{theorem}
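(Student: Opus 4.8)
The plan is to reduce to strongly connected digraphs and then to exhibit an explicit acyclic coloring with $\lceil (c(D)-1)/(g(D)-1)\rceil+1$ colors obtained from an integer ``level'' function. For the reduction, note that every oriented cycle of a digraph lies inside a single strongly connected component, and the condensation is a dag (so it is acyclically $1$-colorable); reusing a common palette, $\dichi(D)=\max_C\dichi(C)$ over the strongly connected components $C$, where an acyclic component uses a single color. Since a strongly connected component $C$ satisfies $g(C)\ge g(D)$ and $c(C)\le c(D)$, its bound is no larger than that of $D$. Hence I may assume $D$ is strongly connected with $2\le g\le c$, where $g=g(D)$ and $c=c(D)$; put $m=\lceil(c-1)/(g-1)\rceil$, so that $c-1\le m(g-1)$, and aim for an acyclic $(m+1)$-coloring.

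The core of the proof is to construct $\lambda\colon V(D)\to\mathbb{Z}_{\ge 0}$ with three properties: (a) no arc has both endpoints at the same $\lambda$-value; (b) every arc $(u,v)$ with $\lambda(v)<\lambda(u)$ has $\lambda(u)-\lambda(v)\ge g-1$; and (c) every oriented cycle $Z$ has $\max_{Z}\lambda-\min_{Z}\lambda\le c-1$. I would build $\lambda$ along an ear decomposition of $D$: label a shortest cycle (of length $g$) consecutively by $0,1,\dots,g-1$ around the cycle, so its closing arc drops by exactly $g-1$; then, each time an ear is appended (a path or a cycle whose endpoints are already labelled), extend $\lambda$ over its new internal vertices by increasing ``by one'' and inserting at most one long backward step of size at least $g-1$, keeping the new labels inside a window governed by the labels of the endpoints. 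The room for the backward step exists because the ear together with a shortest return path through the already-built part is an oriented cycle of length at least $g$, and the window stays within width $c-1$ because that same cycle has length at most $c$.

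Given such a $\lambda$, color each vertex $v$ by $\chi(v)=\lfloor\lambda(v)/(g-1)\rfloor\bmod(m+1)$. If some color class contained an oriented cycle $Z$, then by (a) the function $\lambda$ is not constant on $Z$, so $Z$ has an arc $(u,v)$ with $\lambda(v)<\lambda(u)$; by (b), $\lambda(u)-\lambda(v)\ge g-1$, so $u$ and $v$ lie in distinct length-$(g-1)$ blocks. As $\chi(u)=\chi(v)$, their block indices differ by a positive multiple of $m+1$, whence $\lambda(u)-\lambda(v)\ge(m+1)(g-1)-(g-2)=m(g-1)+1\ge c$, contradicting (c) applied to $Z$. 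Thus each color class is acyclic and $\dichi(D)\le m+1=\lceil(c-1)/(g-1)\rceil+1$.

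The reduction and the coloring verification are routine; the genuine difficulty is the construction of $\lambda$, where the demand (b) that backward arcs jump by at least $g-1$ competes with the demand (c) that every cycle stay within a window of width $c-1$. The main obstacle will be to show that at each ear-insertion both can be satisfied simultaneously — using the girth lower bound to make space for the backward step and the circumference upper bound to cap the window — while correctly treating cycle ears (whose endpoints coincide) and trivial single-arc ears, and while arguing (c) for cycles that weave through several ears.
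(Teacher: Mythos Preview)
The paper does not prove this theorem: it is quoted from Cordero-Michel and Galeana-S\'anchez and then combined with Theorem~\ref{thm:dichi} to bound $\dichi(F_k(D))$. There is thus no proof in the paper to compare your attempt against.

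On the merits of your sketch, the reduction to strongly connected components and the final verification (that properties (a)--(c) on $\lambda$ force the coloring $\lfloor\lambda/(g-1)\rfloor\bmod(m+1)$ to be acyclic) are both correct. The gap is exactly the one you flag yourself: you never establish that a function $\lambda$ with (a)--(c) exists. Two concrete places where the ear-by-ear plan is in trouble. First, a trivial single-arc ear joins two vertices whose $\lambda$-values are already fixed, so nothing in your scheme prevents those values from coinciding (breaking (a)) or from producing a backward arc with drop strictly less than $g-1$ (breaking (b)); you mention these ears but give no mechanism to handle them. Second, property (c) must hold for \emph{every} oriented cycle of $D$, not just the cycle formed by the current ear and one chosen return path; a cycle that threads through several earlier ears and several backward jumps could accumulate a $\lambda$-range exceeding $c-1$, and your outline supplies no invariant that rules this out. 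Until you either produce $\lambda$ satisfying (a)--(c) in full, or replace that step by a different device, the argument remains a plan rather than a proof.
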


Using Theorem~\ref{thm:dichi}, we can conclude the following.

\begin{corollary}
  Let~$D$ be a digraph with at least one cycle.
  Then $\dichi(F_k(D)) \leq \left\lceil\dfrac{c(D)-1}{g(D)-1}\right\rceil + 1$.
\end{corollary}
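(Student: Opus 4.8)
The plan is to obtain the bound by composing two inequalities rather than by applying Theorem~\ref{thm:dichi_girth} to~$F_k(D)$ directly. First I would invoke Theorem~\ref{thm:dichi}, which gives $\dichi(F_k(D)) \leq \dichi(D)$: passing to a token digraph does not increase the dichromatic number. Since~$D$ has at least one cycle, Theorem~\ref{thm:dichi_girth} applies to~$D$ itself and yields $\dichi(D) \leq \left\lceil \frac{c(D)-1}{g(D)-1} \right\rceil + 1$. Chaining the two inequalities produces exactly the claimed upper bound on $\dichi(F_k(D))$, with no further case analysis needed. (By Corollary~\ref{cor:acyclic}, or equivalently by the equality $g(F_k(D)) = g(D)$ of Theorem~\ref{thm:girth_circumf}, the digraph~$F_k(D)$ also has a cycle, so all quantities in sight are well defined, although this is not actually used in the argument.)

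It is worth spelling out why one routes through $\dichi(D)$ rather than through $F_k(D)$. By Theorem~\ref{thm:girth_circumf} we have $g(F_k(D)) = g(D)$, so the denominator is unchanged; however, only $c(F_k(D)) \geq c(D)$ holds, and this inequality is typically strict --- indeed $F_k(D)$ can be Hamiltonian while~$D$ is not. Hence applying Theorem~\ref{thm:dichi_girth} directly to~$F_k(D)$ would give only the weaker estimate $\left\lceil \frac{c(F_k(D))-1}{g(D)-1} \right\rceil + 1$, which may be far larger than the stated bound. The content of the corollary is precisely that the dichromatic number of the (much larger) token digraph is still governed by the parameters of the original digraph, and this is exactly what Theorem~\ref{thm:dichi} supplies.

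Consequently, the only real ingredient behind the corollary is Theorem~\ref{thm:dichi} (proved in Section~\ref{sec:acyclic_partition}); once that is available, the corollary is a one-line deduction. If one instead wanted a self-contained proof, the main obstacle would be establishing $\dichi(F_k(D)) \leq \dichi(D)$ from scratch: given an acyclic $r$-partition $V(D) = U_1 \cup \cdots \cup U_r$ together with topological orders on each~$D[U_i]$, one must construct an acyclic $r$-partition of~$F_k(D)$. The naive candidate --- grouping each $k$-configuration $A$ by its vector of colour counts $(|A \cap U_1|, \dots, |A \cap U_r|)$ --- is acyclic on every class (a monochromatic token move strictly advances the relevant topological order), but it uses far more than~$r$ classes, so the difficulty is to merge these classes down to~$r$ while maintaining a potential function that stays monotone along every monochromatic arc.
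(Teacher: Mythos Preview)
Your proposal is correct and matches the paper's approach exactly: the paper simply says ``Using Theorem~\ref{thm:dichi}, we can conclude the following,'' which is precisely the chaining $\dichi(F_k(D)) \leq \dichi(D) \leq \left\lceil\frac{c(D)-1}{g(D)-1}\right\rceil + 1$ that you describe. Your additional commentary on why one should route through~$D$ rather than apply Theorem~\ref{thm:dichi_girth} directly to~$F_k(D)$ is a nice observation that the paper does not make explicit.
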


\section{Eulerian and Hamiltonian digraphs}
\label{sec:eulerian}

A digraph~$D$ is \defi{Eulerian} if $d^+(v) = d^-(v)$ for all $v \in V(D)$.

\begin{theorem}
  For every~$k$, the digraph~$D$ is Eulerian if and only if~$F_k(D)$ is
  Eulerian.
\end{theorem}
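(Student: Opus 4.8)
The plan is to prove both directions from a single computation of the in- and out-degrees of a node of~$F_k(D)$. Fix a node~$A$ of~$F_k(D)$, i.e.\ a $k$-subset of~$V(D)$. By the definition of~$F_k(D)$, the out-neighbors of~$A$ are exactly the sets obtained by removing some $a \in A$ and adding some $b \in V(D) \setminus A$ with $(a,b) \in A(D)$, and distinct pairs $(a,b)$ yield distinct out-neighbors; hence
\[
  d^+(A) \;=\; \sum_{a \in A} \bigl( d^+(a) - |\{\, b \in A : (a,b) \in A(D) \,\}| \bigr).
\]
Symmetrically,
\[
  d^-(A) \;=\; \sum_{a \in A} \bigl( d^-(a) - |\{\, c \in A : (c,a) \in A(D) \,\}| \bigr).
\]
The key observation is that $\sum_{a \in A} |\{\, b \in A : (a,b) \in A(D) \,\}|$ and $\sum_{a \in A} |\{\, c \in A : (c,a) \in A(D) \,\}|$ both count the arcs of~$D$ with both endpoints in~$A$, so they are equal. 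Subtracting the two displays above, these correction terms cancel and we get
\[
  d^+(A) - d^-(A) \;=\; \sum_{a \in A} \bigl( d^+(a) - d^-(a) \bigr)
\]
for every $k$-subset~$A$ of~$V(D)$.

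From this identity the forward direction is immediate: if~$D$ is Eulerian, then $d^+(a) = d^-(a)$ for every vertex~$a$, so the right-hand side is~$0$ for every~$A$, hence $d^+(A) = d^-(A)$ for every node~$A$ and~$F_k(D)$ is Eulerian.

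For the converse, I would suppose~$F_k(D)$ is Eulerian and set $f(v) = d^+(v) - d^-(v)$; the identity above then says $\sum_{a \in A} f(a) = 0$ for every $k$-subset~$A$. Given any two distinct vertices~$u$ and~$w$, since $1 \le k \le n-1$ (with $n = |V(D)|$) there is a $k$-subset~$A$ with $u \in A$ and $w \notin A$, and then $A' = (A \setminus \{u\}) \cup \{w\}$ is also a $k$-subset; comparing $\sum_{a \in A} f(a) = 0$ with $\sum_{a \in A'} f(a) = 0$ yields $f(u) = f(w)$. Thus~$f$ is constant on~$V(D)$, say equal to~$c$, and plugging this back gives $kc = 0$, so $c = 0$ since $k \ge 1$; hence $d^+(v) = d^-(v)$ for all $v \in V(D)$ and~$D$ is Eulerian.

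There is essentially no hard step here; the single point that deserves a moment's care — and is really the crux of the argument — is recognizing that the two within-$A$ arc counts subtracted in the degree formulas are the same number, which is precisely why they cancel in the displayed identity. Everything else is elementary counting together with one one-element swap between $k$-subsets.
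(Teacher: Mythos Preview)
Your proof is correct and follows essentially the same approach as the paper: both derive the degree identity $d^+(A)-d^-(A)=\sum_{a\in A}(d^+(a)-d^-(a))$ from the cancellation of the within-$A$ arc count~$e(A)$, and both use a one-element swap between $k$-subsets to show $f=d^+-d^-$ is constant. The only cosmetic difference is in the last step of the converse: you conclude $c=0$ from $kc=0$ (reusing the hypothesis on a single $k$-subset), whereas the paper sums over all vertices and uses the handshake identity to get $nc=0$.
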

\begin{proof}
  Let~$A$ be a $k$-token configuration and denote by $e(A)$ the number of arcs
  of~$D$ whose both ends are in~$A$. 
  Then note that $d^+(A) = \sum_{v \in A} d^+(v) - e(A)$ and $d^-(A) = \sum_{v
  \in A} d^-(v) - e(A)$. 

  Suppose first that~$D$ is Eulerian. 
  Since $d^+(v) = d^-(v)$ for every $v \in V(D)$, we conclude directly from the
  observation above that $d^+(A) = d^-(A)$ for every~$A$, and so $F_k(D)$ is
  Eulerian. 

  Suppose now that~$F_k(D)$ is Eulerian.
  For any two distinct vertices~$u$ and~$v$ of~$D$, let~$A$ be a $k$-token
  configuration containing~$u$ but not~$v$, and let $A' = A \setminus \{u\}
  \cup \{v\}$.
  Note that $A \setminus \{u\} = A' \setminus \{v\}$ and let $S^+ = \sum_{x \in
  A \setminus \{u\}} d^+(x)$ and $S^- = \sum_{x \in A \setminus \{u\}}
  d^-(x)$.
  Then
  \begin{equation*}
    d^+(u) = d^+(A) - S^+ + e(A) 
           = d^-(A) - S^+ + e(A) 
           = S^- + d^-(u) - e(A) - S^+ + e(A) 
           = d^-(u) + S^- - S^+.
  \end{equation*}
  Analogously, using~$A'$ instead of~$A$, we can derive that $d^+(v) =
  d^-(v)+S^- - S^+$.
  Therefore $d^+(u) - d^-(u) = S^- - S^+ = d^+(v) - d^-(v)$ and, from this, 
  we conclude that ${\sum_{x \in V(D)} d^+(x) - \sum_{x \in V(D)} d^-(x) = |V(D)|(S^- - S^+)}$.
  Because $\sum_{x \in V(D)} d^+(x) = \sum_{x \in V(D)} d^-(x)$, it must be the
  case that $S^- - S^+ = 0$, and hence $d^+(x) = d^-(x)$ for every~$x \in V(D)$.
\hfill$\qed$
\end{proof}

For (non-directed) graphs on~$n$ vertices, we know that $F_2(C_n)$ is
Hamiltonian if and only if $n=3$ or $n = 5$. 
The same statement holds for directed cycles and the directed token digraph. 
On the other hand, the digraph~$D$ shown in Figure~\ref{fig:nonHam} is not 
Hamiltonian but its token digraph $F_2(D)$ is Hamiltonian. 

\begin{figure}[h!]
  \centering
  \resizebox{0.6\textwidth}{!}{\includegraphics{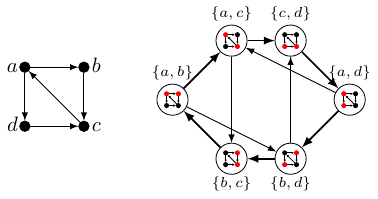}}
  \caption{A non-Hamiltonian digraph with a Hamiltonian token digraph.}
  \label{fig:nonHam}
\end{figure}

\section{Bidirected cliques}
\label{sec:clique}

We call the digraph $\lrarrow{K_n}$ the \defi{complete digraph} on~$n$
vertices. 
A~\defi{bidirected clique} in a digraph~$D$ is a complete subdigraph of~$D$.
The \defi{bidirected clique number} of a digraph~$D$, denoted
by~$\diclique(D)$, is the size of the largest bidirected clique of~$D$.
We use the next known result on the clique number of undirected token graphs to
characterize the bidirected clique number of~$F_k(D)$.

\begin{theorem}[Theorem 5 in \cite{FabilaMonroyFHHUW2012}]
  \label{thm:cliques}
  For any graph~$G$ of order~$n$ and $1 \leq k \leq n-1$, it holds that
  $\clique(F_k(G)) = \min\{\clique(G), {\max\{n-k+1, k+1\}}\}$.
\end{theorem}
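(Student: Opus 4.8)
The plan is to prove the two inequalities separately, matching a lower and an upper bound. Write $\clique(G) = \omega$. For the lower bound I would exhibit two families of cliques in $F_k(G)$. The first (a ``fixed core'') takes a clique $Q'$ of $G$ of size $r_1 := \min\{\omega, n-k+1\}$ and a $(k-1)$-set $S$ disjoint from $Q'$, which exists since $r_1 \le n-k+1$ leaves at least $k-1$ vertices outside $Q'$; the sets $\{S \cup \{v\} : v \in Q'\}$ are pairwise adjacent in $F_k(G)$ because any two of them differ in exactly two vertices of $Q'$, which are adjacent in $G$. The second family (a ``fixed window'') takes a clique $Q''$ of size $r_2 := \min\{\omega, k+1\}$, completes it to a $(k+1)$-set $W \supseteq Q''$ (possible as $k \le n-1$), and uses $\{W \setminus \{w\} : w \in Q''\}$; alternatively this family comes for free from the first via the isomorphism $F_k(G) \cong F_{n-k}(G)$ and complementation. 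These give $\clique(F_k(G)) \ge \max\{r_1, r_2\}$, and by the distributive law $\max\{\min\{\omega, a\}, \min\{\omega, b\}\} = \min\{\omega, \max\{a,b\}\}$ this equals $\min\{\omega, \max\{n-k+1, k+1\}\}$, the claimed value.

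For the upper bound the key is a structural lemma classifying the cliques of $F_k(G)$. Fix a clique $\mathcal{Q}$ with at least three nodes and pick an edge $A_1 A_2$ in it; set $S := A_1 \cap A_2$ (so $|S| = k-1$) and $W := A_1 \cup A_2$ (so $|W| = k+1$), writing $A_1 = S \cup \{a\}$ and $A_2 = S \cup \{b\}$ with $ab \in E(G)$. I would then show that every other node $A$ of $\mathcal{Q}$, being adjacent to both $A_1$ and $A_2$, falls into exactly one of two cases. Writing the two adjacency conditions $|A \triangle A_1| = 2 = |A \triangle A_2|$ as equations in the quantities $|A \cap S|$, the indicators of whether $a,b \in A$, and the number of vertices of $A$ outside $W$, a short computation forces these two indicators to coincide and then forces either $A = S \cup \{v\}$ with $v \notin W$ (a \emph{core} node) or $A \subseteq W$ with $|W \setminus A| = 1$ (a \emph{window} node). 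Hence $\mathcal{Q}$ splits into core nodes and window nodes relative to $A_1 A_2$.

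Next I would argue that these two kinds cannot coexist: a core node $S \cup \{v\}$ and a window node $A' \subseteq W$ have symmetric difference containing $v$ (which lies outside $W \supseteq A'$), a vertex of $S \setminus A'$, and $a$ or $b$, so they differ in at least three vertices and are non-adjacent. Therefore every clique of size at least three is \emph{pure}: either all its nodes share the common $(k-1)$-core $S$, or all are contained in the common $(k+1)$-window $W$. In the first case the ``moving'' vertices form a clique in $G$ lying outside $S$, giving $|\mathcal{Q}| \le \min\{\omega, n-(k-1)\} = \min\{\omega, n-k+1\}$; in the second the ``missing'' vertices form a clique in $G$ inside $W$, giving $|\mathcal{Q}| \le \min\{\omega, k+1\}$. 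Either way $|\mathcal{Q}| \le \max\{\min\{\omega, n-k+1\}, \min\{\omega, k+1\}\} = \min\{\omega, \max\{n-k+1, k+1\}\}$, and cliques of size at most two satisfy this bound trivially, since $\max\{n-k+1,k+1\} \ge 2$ and, if $\omega = 1$, then $G$ and hence $F_k(G)$ is edgeless. Combining the two inequalities yields the equality.

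I expect the main obstacle to be the structural lemma, specifically the exhaustive case analysis showing that a third node is forced to be either a core node or a window node (and the bookkeeping of the adjacency equations), together with the disjointness argument ruling out mixed cliques. Once purity is established, the two counting bounds are immediate.
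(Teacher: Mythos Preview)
The paper does not prove this statement: Theorem~\ref{thm:cliques} is quoted verbatim from Fabila-Monroy \textit{et al.}~\cite{FabilaMonroyFHHUW2012} and used as a black box (in Section~\ref{sec:clique} and in the discussion preceding Conjecture~\ref{conj:nossa}). So there is no in-paper proof to compare against.

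That said, your proposal is correct and is essentially the argument given in the original source. The structural lemma you outline---that any clique of $F_k(G)$ of size at least three is either of ``fixed core'' type $\{S\cup\{v\}:v\in Q\}$ or of ``fixed window'' type $\{W\setminus\{w\}:w\in Q\}$ for a clique $Q$ of $G$---is exactly Fabila-Monroy \textit{et al.}'s characterization of cliques in token graphs, and your derivation of it via the two adjacency equations $p+\alpha=k-1=p+\beta$ is the standard bookkeeping. One small point of phrasing: when you say ``these two kinds cannot coexist'', note that $A_1$ and $A_2$ themselves are simultaneously of both shapes ($A_1=S\cup\{a\}=W\setminus\{b\}$), so the non-coexistence is really about the \emph{other} nodes of $\mathcal{Q}$; your symmetric-difference count does need $A'$ to miss a vertex of $S$, which holds precisely when $A'\notin\{A_1,A_2\}$. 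With that understood, the dichotomy and the ensuing bounds go through exactly as you describe.
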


The \defi{clean graph} of a digraph~$D$, denoted by $D^*$, is the graph
whose vertex set is~$V(D)$ and there is an edge between~$u$ and~$v$ if and only
if $(u,v),(v,u) \in A(D)$.
The next result states that constructing the token graph of~$D$ and then
cleaning it is the same as cleaning~$D$ first and then constructing the token
graph of~$D^*$.
Observe that $F_k(D)^*$ and $F_k(D^*)$ are both (undirected) graphs.

\begin{fact}
  \label{fact:cleaning}
  For any digraph~$D$, $F_k(D)^* = F_k(D^*)$.
\end{fact}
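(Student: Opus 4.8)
The plan is to prove the identity by a direct comparison of edge sets. Both $F_k(D)^*$ and $F_k(D^*)$ are undirected graphs whose vertex set is the collection of all $k$-subsets of $V(D)$, so it suffices to fix two distinct $k$-subsets $A$ and $B$ of $V(D)$ and show that $AB$ is an edge of $F_k(D)^*$ if and only if it is an edge of $F_k(D^*)$.

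For the first direction, I would unwind the definition of the clean graph: $AB \in E(F_k(D)^*)$ means that both $(A,B)$ and $(B,A)$ are arcs of $F_k(D)$. By the definition of the token digraph, the arc $(A,B)$ can exist only if $A\setminus B$ and $B\setminus A$ are singletons, say $A\setminus B=\{a\}$ and $B\setminus A=\{b\}$, and then it exists exactly when $(a,b)\in A(D)$. The reverse arc $(B,A)$ involves exactly the same ordered pair of difference sets, namely $B\setminus A=\{b\}$ and $A\setminus B=\{a\}$, and requires $(b,a)\in A(D)$. Hence $AB\in E(F_k(D)^*)$ holds precisely when $A\setminus B=\{a\}$ and $B\setminus A=\{b\}$ with both $(a,b),(b,a)\in A(D)$; that is, when the symmetric difference $A\triangle B$ equals $\{a,b\}$ and $ab\in E(D^*)$.

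For the other direction, I would unwind the definition of the token graph of the undirected graph $D^*$: $AB\in E(F_k(D^*))$ means that $A\triangle B$ is a pair $\{a,b\}$ of adjacent vertices of $D^*$. Since $|A|=|B|=k$, the condition $|A\triangle B|=2$ already forces $A\setminus B$ and $B\setminus A$ to be singletons $\{a\}$ and $\{b\}$, and adjacency of $a$ and $b$ in $D^*$ is, by definition, exactly the statement $(a,b),(b,a)\in A(D)$. This is the same condition derived above, so the two edge sets coincide and $F_k(D)^*=F_k(D^*)$.

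Since the whole argument is a definition chase, there is no real obstacle; the only point to handle with care is that passing from the arc $(A,B)$ to its reverse $(B,A)$ in the token digraph keeps the underlying pair of swapped vertices the same (it only reverses the arc of $D$ that is used), so requiring both arcs is equivalent to requiring that this single pair spans a digon in $D$, i.e.\ a clean edge. One may equivalently phrase this through the token-sliding interpretation: an arc of $F_k(D)$ and its reverse slide a token back and forth between the same two vertices $a$ and $b$, using $(a,b)$ and $(b,a)$ respectively, and both slides are available exactly when $ab$ is an edge of $D^*$.
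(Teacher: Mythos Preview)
Your proof is correct and follows essentially the same definition chase as the paper: both arguments fix two $k$-subsets, observe that an edge in either graph forces the symmetric difference to be a pair $\{a,b\}$, and then reduce the edge condition on each side to the single requirement that both $(a,b)$ and $(b,a)$ lie in $A(D)$, i.e.\ $ab\in E(D^*)$.
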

\begin{proof}
  Let~$X$ and~$Y$ be two distinct nodes in~$F_k(D)^*$.
  We will prove that~$XY$ is an edge in $F_k(D)^*$ if and only if~$XY$ is an
  edge in $F_k(D^*)$.
  We need to consider only nodes~$X$ and~$Y$ whose symmetric difference is a
  pair of vertices in~$D$.
  So, assume that $(X \setminus Y) \cup (Y \setminus X) = \{x,y\}$, for some
  $x,y \in V(D)$.
  By the definition of clean graph, $XY$ is an edge in $F_k(D)^*$ if and only
  if $(X,Y)$ and $(Y,X)$ are arcs in~$F_k(D)$.
  Now, by the definition of token graph, $(X,Y),(Y,X) \in A(F_k(D))$ if and
  only if $(x,y),(y,x) \in A(D)$. 
  Using the definition of clean graph, $(x,y), (y,x) \in A(D)$ if and only
  if~${xy \in E(D^*)}$.
  Finally, ${xy \in E(D^*)}$ if and only if~$XY$ is an edge in $F_k(D^*)$.
\hfill$\qed$
\end{proof}

\begin{theorem}
  For any digraph~$D$ of order~$n$ and $1 \leq k \leq n-1$, it holds that
  $\diclique(F_k(D)) = \min\{\diclique(D),\max\{n-k+1, k+1\}\}$.
\end{theorem}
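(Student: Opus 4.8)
The plan is to reduce the claim about $\diclique$ to the corresponding statement about the clique number of undirected token graphs, using Fact~\ref{fact:cleaning} and Theorem~\ref{thm:cliques}. The key observation is that a bidirected clique in a digraph~$D$ is exactly a clique in the clean graph~$D^*$: a set~$W$ of vertices induces a complete subdigraph of~$D$ if and only if every pair $u,v \in W$ has both arcs $(u,v)$ and $(v,u)$ in~$D$, i.e., $uv \in E(D^*)$. Hence $\diclique(D) = \clique(D^*)$ for every digraph~$D$. Applying this twice, $\diclique(F_k(D)) = \clique\bigl(F_k(D)^*\bigr)$, and by Fact~\ref{fact:cleaning} this equals $\clique\bigl(F_k(D^*)\bigr)$.

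Now $D^*$ is an undirected graph of order~$n$, so Theorem~\ref{thm:cliques} applies directly to it: $\clique\bigl(F_k(D^*)\bigr) = \min\{\clique(D^*), \max\{n-k+1, k+1\}\}$. Substituting back $\clique(D^*) = \diclique(D)$ yields
\begin{equation*}
  \diclique(F_k(D)) = \min\{\diclique(D), \max\{n-k+1, k+1\}\},
\end{equation*}
which is exactly the statement. So the whole proof is a three-line chain of equalities once the bridging lemma ``$\diclique(\cdot) = \clique((\cdot)^*)$'' is recorded.

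I would structure the written proof as: first state and justify in one sentence that $\diclique(D) = \clique(D^*)$ for any digraph (immediate from the definitions of bidirected clique and clean graph); then write the chain $\diclique(F_k(D)) = \clique(F_k(D)^*) = \clique(F_k(D^*))$ citing Fact~\ref{fact:cleaning}; then invoke Theorem~\ref{thm:cliques} on the graph~$D^*$; and finally rewrite $\clique(D^*)$ as $\diclique(D)$. There is essentially no obstacle here: the only mild subtlety is making sure the identity $\diclique(D)=\clique(D^*)$ is stated cleanly and that Fact~\ref{fact:cleaning}'s output is genuinely an undirected graph (which the excerpt already emphasizes), so that Theorem~\ref{thm:cliques} is applicable verbatim. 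If one wanted an even more self-contained argument avoiding the ``clean graph'' detour, one could instead mimic the proof of Theorem~\ref{thm:cliques} directly: show that a bidirected clique in $F_k(D)$ either lies within a single ``coordinate'' (tokens fixed except one moving inside a bidirected clique of~$D$, giving the $\diclique(D)$ term) or is a ``Johnson-type'' clique formed by a $(k{-}1)$- or $(k{+}1)$-subset together with its one-token neighbors (giving the $\max\{n-k+1,k+1\}$ term), but routing through Fact~\ref{fact:cleaning} is shorter and cleaner, so that is the approach I would take.
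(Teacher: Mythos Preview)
Your proposal is correct and follows essentially the same approach as the paper: both proofs record the identity $\diclique(D)=\clique(D^*)$, apply Fact~\ref{fact:cleaning} to rewrite $\diclique(F_k(D))=\clique(F_k(D)^*)=\clique(F_k(D^*))$, and then invoke Theorem~\ref{thm:cliques} on the undirected graph~$D^*$. The structure and the chain of equalities are virtually identical.
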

\begin{proof}
  First, note that $\diclique(D) = \clique(D^*)$.
  Using this and Fact~\ref{fact:cleaning}, we have that
  \begin{equation*}
    \diclique(F_k(D)) = \clique(F_k(D)^*) = \clique(F_k(D^*)) \, .
  \end{equation*}
  By Theorem~\ref{thm:cliques},
  $\clique(F_k(D^*)) = \min\{ \clique(D^*), \max\{n-k+1, k+1\}\}$.
  Therefore, 
  \begin{equation*}
    \diclique(F_k(D)) = \min\{ \clique(D^*), \max\{n-k+1, k+1\}\} 
                      = \min\{ \diclique(D), \max\{n-k+1, k+1\}\} \, .
  \end{equation*}
\hfill$\qed$
\end{proof}

\section{Acyclic partitions}
\label{sec:acyclic_partition}

An \defi{acyclic $r$-partition} of a digraph~$D$ is a partition of its vertex
set into~$r$ sets such that each one induces an acyclic subdigraph of~$D$.
We can see such partition as a (non-proper) coloring $c \colon V(D) \to
\{1,\ldots,r\}$ of the vertices and each set as a color class.
The~\defi{dichromatic number} of~$D$, denoted by $\dichi(D)$, is the smallest
integer~$r$ such that~$D$ has an acyclic $r$-partition.
These notions were introduced by Neumann-Lara~\cite{Neumann1982} as a
generalization of proper coloring and chromatic number in undirected graphs.

By Corollary~\ref{cor:acyclic}, we have~$\dichi(D) = 1$ if and only
if~$\dichi(F_k(D))=1$.
Neumann-Lara~\cite{Neumann1982} proved that if~$D$ has no oriented odd cycles,
then~$\dichi(D) \leq 2$.
Thus, by Theorem~\ref{thm:odd_cycles}, we can conclude that if~$D$ has no
oriented odd cycles, then~$\dichi(F_k(D)) \leq 2$.
In fact, we can show the following result.

\begin{theorem}
\label{thm:dichi}
    For any digraph~$D$, $\dichi(F_k(D)) \leq \dichi(D)$.
\end{theorem}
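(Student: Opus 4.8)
The plan is to lift an optimal acyclic $r$-partition of $D$, with $r=\dichi(D)$, to an acyclic $r$-partition of $F_k(D)$ by coloring each $k$-token configuration according to a natural aggregate of the colors of its tokens. Write $c\colon V(D)\to\{1,\dots,r\}$ for a coloring witnessing $\dichi(D)=r$, so that each color class $c^{-1}(i)$ induces an acyclic subdigraph of $D$. The natural candidate is to color a node $A$ of $F_k(D)$ by the vector $(|A\cap c^{-1}(1)|,\dots,|A\cap c^{-1}(r)|)$ counting how many tokens of $A$ sit in each color class; however, this uses $\binom{k+r-1}{r-1}$ colors, not $r$, so it must be collapsed. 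The right observation is that a single move along an arc of $F_k(D)$ changes this vector by moving one unit from coordinate $c(a)$ to coordinate $c(b)$, where $(a,b)$ is the underlying arc of $D$. If $c(a)=c(b)$, the vector is unchanged; otherwise exactly one coordinate decreases and one increases.

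The key step is to define a coloring of $F_k(D)$ with only $r$ colors whose color classes are acyclic. I would color $A$ by $\chi(A) := \big(\sum_{v\in A} c(v)\big) \bmod r \in\{0,1,\dots,r-1\}$ (identifying $\{1,\dots,r\}$ with $\mathbb{Z}/r$ as needed). Along an arc $AB$ of $F_k(D)$ with underlying arc $(a,b)$ of $D$, we have $\chi(B)-\chi(A)\equiv c(b)-c(a)\pmod r$. Now consider an oriented cycle $\mathcal{C}=(A_0,A_1,\dots,A_\ell=A_0)$ in $F_k(D)$ lying in a single class $\chi^{-1}(s)$; I must show it cannot exist. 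Summing the increments $c(b_i)-c(a_i)$ around $\mathcal{C}$ gives $0$ in $\mathbb{Z}/r$, which by itself is not a contradiction, so this crude coloring is not obviously enough; the genuine content is that within one class the moves with $c(a)\neq c(b)$ must "cancel" in a way forced by the cycle structure, and the moves with $c(a)=c(b)$ happen entirely inside one color class of $D$.

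I expect the main obstacle to be exactly this last point: ruling out monochromatic oriented cycles. The cleanest route I foresee is to use the machinery already developed: by Lemma~\ref{lem:node_set_cartesian} every strongly connected component of $F_k(D)$ is a Cartesian product of $F_{k_j}(C_j)$ over the strongly connected components $C_j$ of $D$, and an oriented cycle of $F_k(D)$ lives inside one such component, hence projects onto an oriented cycle in some factor $F_{k_j}(C_j)$ (since a Cartesian product of dags is a dag). It therefore suffices to treat a strongly connected $D$, where the coloring $c$ restricted to each $C_j$ still has acyclic color classes; one then argues that a suitably chosen coloring — e.g. refining $\chi$ so that, restricted to configurations with a fixed token-count vector $(k_1,\dots,k_r)$, the coloring remembers enough of the internal structure of each $C_j$-factor — makes every class acyclic, invoking $\dichi(F_{k_j}(C_j^{(i)}))\le \dichi(C_j^{(i)})$ on strictly smaller instances (induction on $|V(D)|$, with the strongly connected case reduced to factors, and the acyclic case handled by Corollary~\ref{cor:acyclic}). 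Assembling the product coloring from the factor colorings, using that $\dichi$ of a Cartesian product is at most the max of the factors' dichromatic numbers, then yields $\dichi(F_k(D))\le\dichi(D)$. The delicate bookkeeping is checking that the per-factor colorings can be chosen consistently across the whole component, and that passing to a factor genuinely decreases $|V(D)|$ unless $D$ is already strongly connected with a single nontrivial component, which is the base case to handle directly.
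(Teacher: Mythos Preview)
Your coloring $\chi(A)=\sum_{v\in A} c(v)\bmod r$ is exactly the one the paper uses, and it \emph{is} enough on its own---you abandoned it one step too early. The observation you are missing is this: if $(A,B)$ is a single arc of $F_k(D)$ with $\chi(A)=\chi(B)$, then $c(b)-c(a)\equiv 0\pmod r$; but $c(a),c(b)\in\{1,\dots,r\}$, so $|c(b)-c(a)|\le r-1$, and hence $c(a)=c(b)$. In other words, \emph{every} arc inside a color class of $\chi$ corresponds to a move of a token along an arc of $D$ whose endpoints share the same $c$-color. Your argument about ``summing the increments around $\mathcal{C}$'' is beside the point: you don't need the sum to vanish, you need each individual increment to vanish, and it does.

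Once you have this, the rest is immediate. A weakly connected monochromatic subdigraph $J$ of $F_k(D)$ has all its arcs coming from moves within a fixed color class $H_i$ of $D$, so the token-count vector $\tau(A)=(|A\cap H_1|,\dots,|A\cap H_r|)$ is constant on $J$; thus $J$ sits inside the Cartesian product $\prod_i F_{k_i}(H_i)$. Each $H_i$ is acyclic, so each $F_{k_i}(H_i)$ is acyclic (Corollary~\ref{cor:acyclic}), and a Cartesian product of dags is a dag. Hence $J$ has no oriented cycle, and $\chi$ is an acyclic $r$-partition of $F_k(D)$. No induction, no appeal to Lemma~\ref{lem:node_set_cartesian}, no ``delicate bookkeeping'' is needed.
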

\begin{proof}
    Let~$c \colon V(D) \to \{1,\ldots,r\}$ be an optimal acyclic partition
    of~$D$, with $r = \dichi(D)$, and let $H_1,\dots,H_r$ be the subdigraphs
    of~$D$ induced by each of the color classes.
    As~$c$ is an acyclic partition, each~$H_i$ is acyclic. 
    For each node $A \in F_k(D)$, let 
    \begin{equation*}
        c'(A) = \sum\limits_{a\in A} c(a) \mod r \, .
    \end{equation*}
    We aim to show that~$c'$ is an acyclic partition of~$F_k(D)$.
    For this purpose, let us define an auxiliary $r$-vector for the nodes
    of~$F_k(D)$. 
    For a node $A \in F_k(D)$, let $\tau(A) = (t_1,\dots,t_r)$, with $t_i = |A
    \cap V(H_i)|$ for $i \in [r]$.

    Consider an arc $(A,B)$ of $F_k(D)$, and let $(a,b) \in A(D)$ be the
    corresponding arc such that~$(A,B)$ is generated by sliding one token along
    $(a,b)$.
    Observe that $(a,b)$ belongs to~$H_i$, for some $i \in [r]$, if and only if
    $\tau(A) = \tau(B)$. 
    Thus, 
    \begin{equation*}
        c'(A) = c'(B) \iff \tau(A) = \tau(B) \, .
    \end{equation*}
    This observation is generalized as follows. 
    If $J \subseteq F_k(D)$ is a weakly connected subdigraph contained in a
    same color class of~$F_k(D)$, then $\tau(A) = \tau(B)$ for any two nodes
    $A,B \in J$.
    This fact implies that~$J$ is generated by moving $k_1,\dots,k_r$ tokens on
    $H_1,\dots,H_r$, respectively, where $0 \le k_i \le |V(H_i)|$ and $k_1 +
    \dots + k_r = k$. 
    In particular, we have that the tokens moving on a class~$H_i$ cannot slide
    to any other class~$H_j$, implying that~$J$ contains no oriented cycle.

    On the other hand, note that a color class~$\mathcal{H}$ of~$F_k(D)$ is a
    disjoint union of maximal weakly connected subdigraphs of~$F_k(D)$ having
    the same color in~$c'$, and given that each of these subdigraphs contains
    no oriented cycle, we conclude that~$\mathcal{H}$ contains no oriented
    cycle. 
    Therefore, $c'$ is an acyclic partition of~$F_k(D)$, and by the definition
    of~$c'$, we have $\dichi(F_k(D)) \le r = \dichi(D)$, as we wanted. 
\hfill$\qed$
\end{proof}

The~\defi{chromatic number} of a graph~$G$, denoted by $\chi(D)$, is the smallest
integer~$k$ such that~$G$ has a proper $k$-coloring.
In the undirected case, it is known that $\chi(F_k(K_n)) < n = \chi(K_n)$ for
some values of~$k$ and~$n$. 
Specifically, $\chi(F_2(K_n)) = n-1$ for any even~$n$. 
Note that $\dichi(\lrarrow{G}) = \chi(G)$. 
Hence, by Property~\ref{property3}, $\dichi(F_2(\lrarrow{K_n})) =
\dichi(\lrarrow{F_2(K_n)}) = \chi(F_2(K_n)) = n-1 < \chi(K_n) =
\dichi(\lrarrow{K_n})$ for every even~$n$.
It would be nice to understand the behavior of~$\chi(F_k(G))$
and~$\dichi(F_k(D))$ as~$k$ varies. 
For the undirected case, we know we may restrict attention to $1 \leq k \leq
n/2$. 
Similarly, because $\dichi(D) = \dichi(\larrow{D})$ and $\dichi(F_k(D)) =
\dichi(F_{n-k}(\larrow{D}))$, we may also restrict attention to $1 \leq k \leq
n/2$ in the directed case. 

Neumann-Lara~\cite{Neumann1982} proved that 
\begin{equation*}
    \dichi(D) = \max\{\dichi(C) \colon C \text{ is a strongly connected component of } D\}\;.
\end{equation*}
Let~$C$ be a strongly connected component of~$D$ such that
$\dichi(D)=\dichi(C)$. 
If $|V(D)| \geq |V(C)|+k-1$, then~${\dichi(F_k(D)) \geq \dichi(D)}$. 
Indeed, we can fix~$k-1$ tokens in~$k-1$ nodes outside~$C$ and leave one in~$C$
to derive that $F_1(C) \subseteq F_k(D)$.
Consequently, $\dichi(F_k(D)) \geq \dichi(F_1(C)) = \dichi(C) = \dichi(D)$, and
therefore $\dichi(F_k(D)) = \dichi(D)$ by Theorem~\ref{thm:dichi}.
In particular, $D$ must have at least two strongly connected components for
$|V(D)| \geq |V(C)|+k-1$ to hold. 
If~$D$ is strongly connected, then it might happen that $\dichi(F_k(D)) <
\dichi(D)$ for some~$k$, as we previously pointed out.
 
One might ask whether $\dichi(F_2(D)) < \dichi(D)$ for every strongly connected
graph~$D$, but this is not true.
For instance, $\dichi(F_k(\rarrow{C_n})) = \dichi(\rarrow{C_n}) = 2$ for the
oriented cycle $\rarrow{C_n}$.
(In particular, see $\rarrow{C_5}$ and its 2-token digraph in
Figure~\ref{fig:oddcyclewithkernel}.)
It would also be nice to find out the exact conditions that assure that
${\dichi(F_k(D)) = \dichi(D)}$ holds, even considering only strongly connected
digraphs~$D$ and~$k=2$.

While investigating this topic, we considered the same question for an
undirected graph~$G$ and~$\chi$, as it coincides with the question for the
digraph~$\lrarrow{G}$ and~$\dichi$.
We conjecture the following.

\begin{conjecture}\label{conj:nossa}
  $\chi(F_2(G)) < \chi(G)$ if and only if $G = K_n$ for~$n$ even. 
\end{conjecture}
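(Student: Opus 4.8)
The ``if'' direction is essentially recorded before the conjecture: $F_2(K_n)$ is the line graph $L(K_n)$ (two $2$-subsets are adjacent in $F_2(K_n)$ exactly when they meet, since every pair of vertices of $K_n$ is an edge), so $\chi(F_2(K_n))=\chi'(K_n)$, which is $n-1$ for even $n$ and $n$ for odd $n$; thus $\chi(F_2(K_n))<\chi(K_n)$ precisely when $n$ is even. For the ``only if'' direction, recall first that $\chi(F_2(G))\le\chi(G)$ always: by Theorem~\ref{thm:dichi} applied to $\lrarrow{G}$ together with Property~\ref{property3}, one has $\chi(F_2(G))=\dichi(\lrarrow{F_2(G)})=\dichi(F_2(\lrarrow{G}))\le\dichi(\lrarrow{G})=\chi(G)$. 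So it suffices to show $\chi(F_2(G))\ge\chi(G)$ whenever $G\ne K_n$.

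Write $r=\chi(G)$. The plan is to reduce $\chi(F_2(G))\ge r$ to a single critical case, using two structural features of $F_2(G)$: (a) for each vertex $v$, the nodes $\{v,u\}$ with $u\ne v$ induce in $F_2(G)$ a copy of $G-v$ (the \emph{link} of $v$); and (b) $F_2(H)$ is a subgraph of $F_2(G)$ whenever $H\subseteq G$. By (a), if $G$ is not vertex-critical we pick $v$ with $\chi(G-v)=r$ and are done, since $\chi(F_2(G))\ge\chi(G-v)=r$; and by (b), arguing by induction on $|E(G)|$, we may also assume no edge $e$ satisfies $\chi(G-e)=r$. So we may assume $G$ is $r$-critical (both vertex- and edge-critical). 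If $\omega(G)=r$ then $K_r\subseteq G$, and criticality forces $G=K_r=K_n$, contradicting $G\ne K_n$. Hence $\omega(G)<r$, and the conjecture is reduced to the assertion
\begin{equation*}
(\star)\qquad G\ \text{$r$-critical with}\ \omega(G)<r=\chi(G)\ \Longrightarrow\ \chi(F_2(G))\ge r.
\end{equation*}

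For $r\le 2$ this is vacuous, and for $r=3$ it is easy: such a $G$ is triangle-free with $\chi(G)=3$, so it contains an odd cycle, and then $F_2(G)$ contains a cycle of the same (odd) length — by the token-sliding argument in the proof of Theorem~\ref{thm:girth_circumf} (rotate one token around the cycle while keeping the other fixed, or, if the cycle spans $G$, let both tokens traverse it) — whence $F_2(G)$ is not bipartite and $\chi(F_2(G))\ge 3$.

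The case $r\ge 4$ of $(\star)$ is where I expect the real obstacle, and it is the part I do not see how to finish. Now $\omega(F_2(G))=\omega(G)<r$ by Theorem~\ref{thm:cliques} (since $G$ is not complete), so no clique bound can force $\chi(F_2(G))\ge r$; one needs a genuinely chromatic obstruction to $(r-1)$-colorability of $F_2(G)$. The natural attempt is a recoloring argument: from a proper $(r-1)$-coloring $\phi$ of $F_2(G)$, its restriction to the link of a vertex $v$ is a proper $(r-1)$-coloring of $G-v$ that — since $\chi(G)=r$ — must use all $r-1$ colors on $N_G(v)$, for otherwise one extends it to $v$; setting $\psi(uv):=\phi(\{u,v\})$, one obtains a symmetric coloring of the pairs of $V(G)$ whose restriction to each link properly colors $G-v$. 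For $r=3$ this system amounts to a proper $2$-coloring of an auxiliary odd cycle, which is impossible; the hope is to isolate a parity- or topology-flavored obstruction of the same type for every $r$-critical $G\ne K_r$ — via the neighborhood complex of $F_2(G)$ and Lovász-type bounds, or by assembling the link colorings into a proper $(r-1)$-coloring of $G$. Pinning down $\chi(F_2(G))$ for imperfect critical graphs, such as Mycielskians and Kneser and Schrijver graphs, is exactly where this stalls.
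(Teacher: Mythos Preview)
Your proposal is not a proof, and you correctly flag this yourself: the case $r\ge 4$ of $(\star)$ is left unfinished. That is not a flaw in your write-up so much as a reflection of the fact that the statement is a \emph{conjecture} in the paper, not a theorem. The paper does not prove it either; it records the easy direction (exactly as you do, via $\chi(F_2(K_n))=\chi'(K_n)$), and then lists properties a minimal counter-example to the hard direction would have: $\chi(G)>3$, $G$ critical, $\Delta(G)\ge\chi(G)$, and $\chi(G)>\omega(G)$. Your reductions via links and via $F_2(H)\subseteq F_2(G)$ reproduce precisely the criticality and $\omega(G)<\chi(G)$ observations, and your $r\le 3$ argument matches the paper's use of ``$G$ bipartite iff $F_2(G)$ bipartite'' to force $\chi(G)>3$. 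So up to this point you and the paper are in complete agreement, and neither of you has a proof.

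Where the paper goes further than you is not toward a general argument but toward \emph{evidence}: it verifies the conjecture for two concrete families of $4$-critical graphs, namely $4$-chromatic graphs with a universal vertex (in particular odd wheels) and the Mycielski graphs $M(C_k)$ for odd $k$. Both proofs work by assuming a proper $3$-coloring of $F_2(G)$, reading it as an edge-coloring of $K_n$, and extracting a contradiction from the structure around a chosen vertex; this is in the spirit of your ``link'' idea, but carried out by hand for those specific graphs rather than abstractly. Your suggested attack via neighborhood complexes or assembling link colorings into a global $(r-1)$-coloring of $G$ is a reasonable direction, but it is speculative and the paper offers no indication that such an approach succeeds. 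In short: your reduction is sound and matches the paper's, your admission of incompleteness is accurate, and the conjecture remains open.
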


Let~$G$ be a graph of order~$n$.
A proper coloring of~$F_2(G)$ can be viewed as an edge-coloring of~$K_n$ (not
necessarily proper) such that, if the edges~$uv$ and~$vw$ of~$K_n$ are assigned
the same color, then $uw \notin E(G)$. 
(If $uw \in E(G)$, then $\{u,v\}$ and~$\{v,w\}$ are adjacent 2-token
configurations in~$F_2(G)$.)
We will use this in the figures ahead, because we find it a convenient and
compact way to present a coloring for~$F_2(G)$. 
Also, we will denote a 2-token configuration $\{u,v\}$ simply as~$uv$, to
simplify the notation, and we will call it a configuration instead of a vertex
of~$F_2(G)$ to avoid confusion with the vertices of~$G$.

When $G = K_n$, such an edge-coloring of~$K_n$ is exactly a proper
edge-coloring of~$K_n$. 
Hence $\chi(F_2(K_n)) = \chi'(K_n)$ and thus one direction of
Conjecture~\ref{conj:nossa} is already known: $\chi(F_2(K_n)) = n-1 =
\chi(G)-1$ for~$n$ even, because there is a partition of the edges of~$K_n$
into~$n-1$ perfect matchings, that is, $\chi'(K_n) = n-1$. 

A graph~$G$ is \defi{critical} if $\chi(G-v) = \chi(G)-1$ for every vertex~$v$
of~$G$.
We say~$G$ is \defi{$k$-critical} if~$G$ is critical and $\chi(G) = k$. 
We observe that a possible counter-example~$G$ for the other direction would have 
the following properties:
\begin{inparaenum}[(a)]
\item $\chi(G) > 3$;
\item $G$ is critical;
\item the maximum degree $\Delta(G) \geq \chi(G)$;
\item $\chi(G) > \clique(G)$.
\end{inparaenum}
Indeed, it is known that a graph~$G$ is bipartite if and only if~$F_2(G)$ is bipartite
(see~\cite{FabilaMonroyFHHUW2012}), which implies $(a)$.
If $G$ is non-critical, there exists a vertex $v \in V(G)$ such that $\chi(G) = \chi(G-v)$.
In this case, it suffices to consider the subgraph of~$F_2(G)$ consisting of all 
configurations with a token at~$v$, which is isomorphic to $F_1(G-v)$, and this in
turn is isomorphic to $G-v$.
We then have $\chi(F_2(G)) \ge \chi(G-v) = \chi(G)$.
This proves $(b)$.
Assume now that~$G$ is critical, so~$G$ must have minimum degree at least $\chi(G)-1$.
If $\Delta(G) < \chi(G)$, then~$G$ is a $(\chi(G)-1)$-regular graph, and, by Brook's Theorem
(see~\cite{Brooks}), $G$ must be either an odd cycle or a complete graph, which is not
the case by $(a)$ and given that $\chi(F_2(K_n)) = \chi'(K_n)=n$ for $n$ odd.
Thus, $(c)$ must hold.
Finally, suppose $\chi(G) \le \omega(G)$.
By Theorem~\ref{thm:cliques} we have $\omega(F_k(G)) = \min\{\omega(G), \max\{n-k+1,k+1\}\}$,
and then, as~$G$ is not a complete graph, we have $\chi(F_2(G)) \ge \omega(F_2(G)) = \omega(G) = \chi(G)$,
which proves $(d)$.

Next we prove that Conjecture~\ref{conj:nossa} holds for two classes of
4-critical graphs. 
A vertex in a graph~$G$ is \defi{universal} if it is adjacent to all other
vertices of~$G$. 

\begin{lemma}\label{lem:universalvx}
  For every graph~$G$ on $n \geq 5$ vertices, with $\chi(G) = 4$ and a
  universal vertex~$u$, $\chi(F_2(G)) = 4$. 
\end{lemma}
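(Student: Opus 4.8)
The plan is to establish the two inequalities $\chi(F_2(G))\le 4$ and $\chi(F_2(G))\ge 4$ separately. The upper bound is immediate: by Property~\ref{property3} and Theorem~\ref{thm:dichi},
\begin{equation*}
\chi(F_2(G))=\dichi(\lrarrow{F_2(G)})=\dichi(F_2(\lrarrow{G}))\le\dichi(\lrarrow{G})=\chi(G)=4
\end{equation*}
(equivalently, color the configuration $\{x,y\}$ by $c_0(x)+c_0(y)\bmod 4$ for an optimal proper coloring $c_0$ of $G$). So the whole content is the lower bound, which I would prove by contradiction: assuming $F_2(G)$ has a proper $3$-coloring, I will derive that $G-u$ is $3$-chromatic but also ``essentially bipartite''. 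Throughout I use the edge-coloring reformulation already introduced in the paper: a proper $3$-coloring of $F_2(G)$ is a map $\phi\colon E(K_n)\to\{1,2,3\}$ on the vertex set $V(G)$ such that any two edges $vw$ and $vz$ sharing the endpoint $v$ and receiving the same color satisfy $wz\notin E(G)$. For a vertex $v$ and a color $i$, write $N_i(v)=\{w:\phi(vw)=i\}$; the condition says precisely that every $N_i(v)$ is independent in $G$.

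The first step exploits the universal vertex to pin down the structure of $\phi$. Put $A_i=N_i(u)$; then $A_1,A_2,A_3$ are independent in $G$ and partition $V(G)\setminus\{u\}$, and since $u$ is universal, $\chi(G-u)=\chi(G)-1=3$, so $G-u$ is not bipartite and hence all three $A_i$ are nonempty. Let $c(v)=i$ whenever $v\in A_i$; this is a proper $3$-coloring of $G-u$. Now, for any $v\ne u$, the set $N_{c(v)}(v)$ is independent and contains the universal vertex $u$, so it equals $\{u\}$. Consequently, for all $v,w\ne u$ we get $\phi(vw)\notin\{c(v),c(w)\}$. This forces: every edge between distinct classes $A_i$ and $A_j$ receives the unique color in $\{1,2,3\}\setminus\{i,j\}$, and every edge inside a class $A_i$ receives one of the two colors different from $i$.

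To finish I would use $n\ge 5$: since $|A_1|+|A_2|+|A_3|=n-1\ge 4$, some class has at least two vertices, say $|A_1|\ge 2$, and I relabel so that edges from $A_1$ to $A_2$ are colored $3$ and edges from $A_1$ to $A_3$ are colored $2$. Fix any $v\in A_1$. For every other $w\in A_1$ we have $\phi(vw)\in\{2,3\}$; if $\phi(vw)=3$ then $w\in N_3(v)$ while $A_2\subseteq N_3(v)$, and independence of $N_3(v)$ forces $w$ to have no neighbor in $A_2$; symmetrically $\phi(vw)=2$ forces $w$ to have no neighbor in $A_3$. Since $|A_1|\ge 2$, this shows every vertex of $A_1$ has a neighbor in at most one of $A_2,A_3$. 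Partition $A_1=P\sqcup Q\sqcup R$ into the vertices having a neighbor in $A_2$ only, in $A_3$ only, and in neither; as $A_1$ is independent, $R$ consists of isolated vertices of $H:=G-u$. Then $P\cup A_3$ and $Q\cup A_2$ are independent sets of $H$ covering $V(H)\setminus R$, so $H-R$ is bipartite; but deleting isolated vertices does not change the chromatic number, so $\chi(H-R)=\chi(H)=3$, a contradiction. Hence $\chi(F_2(G))\ge 4$, and with the upper bound, $\chi(F_2(G))=4$.

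The crux I expect is the structural step: recognizing that, because $u$ is universal, the color classes at $u$ constitute a proper $3$-coloring of $G-u$ and every remaining edge is forbidden the colors of both its endpoints. Once that is in place, the independence conditions on the sets $N_i(v)$ inside a color class of size at least $2$ collapse $G-u$ into a bipartite graph plus isolated vertices, which is the desired contradiction. This is also exactly where the hypothesis $n\ge 5$ is needed: for $n=4$ one has $G=K_4$ and indeed $\chi(F_2(K_4))=\chi'(K_4)=3$.
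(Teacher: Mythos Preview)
Your proof is correct. The setup mirrors the paper's exactly: both define the three color classes $A_i$ (the paper calls them $I_j$) at the universal vertex $u$, observe they are nonempty independent sets giving a proper $3$-coloring of $G-u$, and deduce that every edge of $K_n$ between distinct classes must receive the unique remaining color. Both then use $n\ge 5$ to find a class of size at least two.

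Where you diverge is in the endgame. The paper works inside the large class, say $I_2$: it picks an edge $v_1v_2$ from $I_1$ to $I_2$ and an edge $v_3v_2'$ from $I_3$ to $I_2$, and chases colors to show that every intra-$I_2$ edge incident to $v_2$ is forced to color $1$ while every intra-$I_2$ edge incident to $v_2'$ is forced to color $3$, giving a direct clash on the edge $v_2v_2'$ (or on any $v_2v$ if $v_2=v_2'$). Your argument is more structural: from the independence of the sets $N_i(v)$ you conclude that each vertex of the large class $A_1$ can have $G$-neighbors in at most one of $A_2,A_3$, and then you repartition $V(G-u)$ into two independent sets (plus isolated vertices), forcing $\chi(G-u)\le 2$. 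Your route avoids tracking specific edge-colors and yields a clean global contradiction; the paper's route is a quicker local computation. Both rely on the same key observation (forced cross-class colors) and both use $n\ge 5$ in the same place.
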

\begin{proof}
  It is enough to prove that there is no proper 3-coloring for~$F_2(G)$,
  because we already know that $\chi(F_2(G)) \leq
  \chi(G)$~\cite[Thm.~6]{FabilaMonroyFHHUW2012}.
  For a contradiction, suppose there is such a 3-coloring with colors 1, 2, 3.  
  Let $I_j = \{v \colon \mbox{configuration $uv$ has color $j$}\}$
  for~$j=1,2,3$. 
  Note that the sets $I_1,I_2,I_3$ form a partition of $V(G-u)$. 
  Figure~\ref{fig:coloring} presents the information we are deriving on this
  coloring. 
  
  \begin{figure}[h!]
    \centering
    \resizebox{0.55\textwidth}{!}{\includegraphics{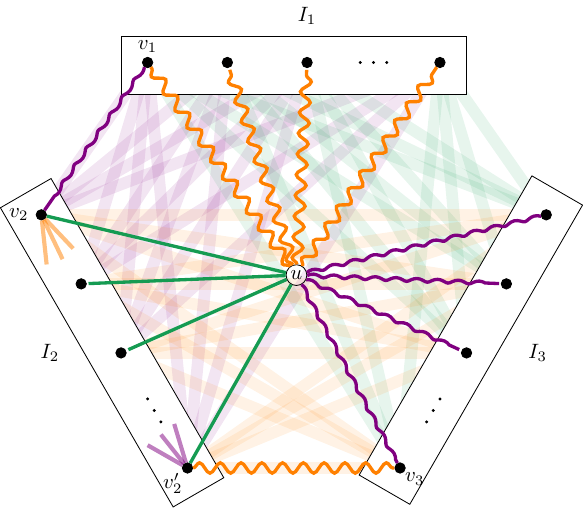}}
    \caption{The 3-coloring of $F_2(G)$ from Lemma~\ref{lem:universalvx}
    represented as an edge-coloring of the complete graph on $|V(G)|$ vertices.}
    \label{fig:coloring}
  \end{figure}

  Because this 3-coloring of~$F_2(G)$ is proper, there cannot be an edge of~$G$
  with the two ends in the same~$I_j$, that is, each~$I_j$ is an independent
  set in~$G$. 
  Moreover, since $\chi(G) = 4$, each~$I_j$ is non-empty.
  For every $v \in I_1$ and $w \in I_2$, the configuration~$vw$ must have
  color~3, because the configuration~$uv$ has color~1 and the
  configuration~$uw$ has color~2, and both are edges of~$G$. 
  Thus, all configurations~$vw$ for $v \in I_1$ and $w \in I_2$ have color~3. 
  Similarly, all configurations~$vw$ for $v \in I_1$ and~$w \in I_3$ have
  color~2, and all configurations~$vw$ for $v \in I_2$ and $w \in I_3$ have
  color~1. 

  Because $n \geq 5$, at least one of the sets~$I_j$ has at least two vertices. 
  Without loss of generality, say~$|I_2| \geq 2$.
  Because $\chi(G) = 4$, there must be an edge between a vertex $v_1 \in I_1$
  and a vertex~$v_2 \in I_2$.  
  For every $v \in I_2$ distinct from~$v_2$, as argued in the previous
  paragraph, the configurations $v_1v_2$ and~$v_1v$ have color~3. 
  Also, as both~$v_2$ and~$v$ are in~$I_2$, the configurations~$uv_2$ and~$uv$
  have color~2. 
  Thus, the configuration~$v_2v$ must have color~1.
  So all configurations~$v_2v$, for every $v \in I_2$ distinct from~$v_2$, have
  color~1. 
  However, there must also be an edge between a vertex $v_3 \in I_3$ and a
  vertex $v'_2 \in I_2$, and this implies that all configurations~$v'_2v$, for
  every $v \in I_2$ distinct from~$v'_2$, have color~3. 
  Because $|I_2| \geq 2$, we reach a contradiction
  (on the color of the configuration~$v_2v'_2$ if $v_2 \neq v'_2$, or, if~$v_2
  = v'_2$, on the color of the configurations~$v_2v$ for any $v \in I_2$
  distinct from~$v_2$; at least one such configuration exists because $|I_2|
  \geq 2$).  
\hfill$\qed$
\end{proof}

A well-known class of 4-critical graphs are the odd wheels $W_{2k+1}$. 
A consequence of Lemma~\ref{lem:universalvx} is that they do not contradict
Conjecture~\ref{conj:nossa}, because they have a universal vertex. 

\medskip 

Let~$G$ be a graph on~$n$ vertices $v_1,\ldots,v_n$. 
The \defi{Mycielski graph of~$G$} is the graph~$M(G)$ that contains a copy
of~$G$ together with~$n+1$ vertices $u_0,u_1,\ldots,u_n$, where each~$u_i$ is
adjacent to~$u_0$ and to each neighbor of~$v_i$ in~$G$, for $i=1,\ldots,n$. 
Mycielski~\cite{Mycielski1955} proved that $\chi(M(G)) = \chi(G)+1$.
It is not hard to prove that, for an odd~$k$, the graph $M(C_k)$ is 4-critical. 
Note that~$M(C_k)$ does not have a universal vertex, so
Lemma~\ref{lem:universalvx} does not apply to~$M(C_k)$.
Yet we can prove the following on~$F_2(M(C_k))$, which implies that~$M(C_k)$
for odd~$k$ does not contradict Conjecture~\ref{conj:nossa} either. 

\begin{lemma}\label{lem:Ck}
  For every odd $k$, $\chi(F_2(M(C_k))) = 4$. 
\end{lemma}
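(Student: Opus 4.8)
The plan is to prove this by establishing both inequalities. Since $M(C_k)$ is $4$-critical, Theorem~\ref{thm:dichi} (equivalently~\cite[Thm.~6]{FabilaMonroyFHHUW2012}) already gives $\chi(F_2(M(C_k))) \le \chi(M(C_k)) = 4$, so the real content is the lower bound $\chi(F_2(M(C_k))) \ge 4$, i.e.\ that $F_2(M(C_k))$ admits no proper $3$-coloring. As in Lemma~\ref{lem:universalvx}, I would argue by contradiction: assume there is a proper $3$-coloring of $F_2(M(C_k))$ with colors $1,2,3$, viewed (following the discussion before Lemma~\ref{lem:universalvx}) as an edge-coloring of the complete graph on $V(M(C_k))$ in which two edges sharing an endpoint and having the same color must be a non-edge of $M(C_k)$.

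First I would exploit the fact that $M(C_k)$ contains the apex vertex $u_0$, which is adjacent to all of $u_1,\dots,u_k$. Restricting the coloring to configurations $\{u_0,u_i\}$ partitions $\{u_1,\dots,u_k\}$ into three color classes $I_1,I_2,I_3$, each of which must be an independent set in $M(C_k)$; but $\{u_1,\dots,u_k\}$ is already independent in $M(C_k)$, so that constraint is automatic and gives less leverage than in the universal-vertex case. So the main work is to bring in the cycle $C_k$ with vertices $v_1,\dots,v_k$ and the adjacencies $u_i \sim v_{i-1}, v_{i+1}$ (indices mod $k$) together with $v_i \sim v_{i\pm1}$. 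I would track the induced coloring on the configurations $\{v_i,v_{i+1}\}$ (edges of $C_k$), on $\{u_i,v_j\}$ for the prescribed adjacencies, and on $\{u_0,v_i\}$. Using that $u_0$ is adjacent to every $u_i$ but to no $v_i$, and $u_i$ is adjacent to $v_{i-1},v_{i+1},u_0$, one gets a web of forced equalities and inequalities among the colors of these configurations as one walks around the cycle. The odd length of $C_k$ should be what makes the system inconsistent — analogous to why $\chi(C_k) = 3$ fails to drop and why the Mycielskian is $4$-chromatic — so I expect a parity obstruction: propagating a forced pattern around the odd cycle returns a contradiction after one full loop.

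Concretely, I would proceed in steps: (1) set up notation and the edge-coloring reformulation; (2) record the color of $\{u_0,v_i\}$, call it $a_i$, and of $\{u_0,u_i\}$, call it $b_i$, and derive constraints linking $a_i$ and the colors of the triangle-ish structures $\{u_i, v_{i-1}\}, \{u_i,v_{i+1}\}, \{v_{i-1},v_{i+1}\}$ — here it helps that $v_{i-1}\sim v_i \sim v_{i+1}$ but whether $v_{i-1}\sim v_{i+1}$ depends on $k>3$ (for $k=3$ one must treat $M(C_3) = M(K_3)$ separately, or note $C_3$ makes $v_{i-1}v_{i+1}$ an edge too); (3) use the adjacency $u_i\sim v_{i+1}$ and $u_{i+1}\sim v_i$ together with $v_i\sim v_{i+1}$ to force relations between consecutive $a_i$'s and $b_i$'s; (4) chase these relations around the cycle and use oddness to reach a contradiction. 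I would likely also need the small-case check $k=3$ done by hand (or by noting $M(C_3)$ has small enough $F_2$ to verify directly, or that $M(C_3)$ contains a universal-ish structure), and possibly $k=5$ as a sanity check.

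The hard part will be organizing the case analysis so the parity argument is clean rather than a sprawling check of color patterns: there are three colors and many configurations, and unlike the universal-vertex case in Lemma~\ref{lem:universalvx} we cannot collapse everything through a single dominating vertex. The key obstacle is identifying the right invariant — some function of the colors $a_i$ (or of the pair $(a_i,a_{i+1})$) that is forced to change in a fixed cyclic way each step, so that an odd number of steps is impossible; getting the adjacency bookkeeping in $M(C_k)$ exactly right (which $u_i$–$v_j$ pairs are edges, and handling $k=3$ where extra edges appear) is where the care is needed. I would also double-check that $M(C_k)$ is indeed $4$-critical for odd $k$ (claimed in the text as "not hard") since criticality is what guarantees the statement is the sharp one and rules out trivial reductions.
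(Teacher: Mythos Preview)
Your setup is right and matches the paper: reformulate a hypothetical proper $3$-coloring of $F_2(M(C_k))$ as an edge-coloring of the complete graph, and use the apex $u_0$ to define the classes $I_j$. But the paper does \emph{not} run a parity argument around the odd cycle. Instead, it proves a small combinatorial claim first: any string over a $3$-letter alphabet that uses all three letters contains a substring of the form $AB^jC$ (three distinct letters, the middle one repeated $j\ge 1$ times). Applying this to the sequence of colors of $u_0u_1,\dots,u_0u_k$ (each color appears because $\chi(M(C_k))=4$ forces every $I_j$ to contain some $u_i$), one gets, after renaming, $u_1\in I_1$, $u_2,\dots,u_\ell\in I_2$, $u_{\ell+1}\in I_3$. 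A short local case analysis ($\ell=2$ versus $\ell>2$), using only the adjacencies among $u_0,u_1,u_2,u_3,u_{\ell+1},v_2$, then forces an uncolorable configuration. Oddness of $k$ enters only to guarantee $\chi(M(C_k))=4$, not through any cycling.

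Your plan, by contrast, hopes that tracking $a_i=\text{color}(u_0v_i)$ and $b_i=\text{color}(u_0u_i)$ around the cycle will produce a parity obstruction. This is where the gap is. The $u_i$'s are pairwise non-adjacent, so there is no constraint forcing consecutive $b_i$'s to differ; the $a_i$'s do properly $3$-color $C_k$, but that alone is consistent for odd $k$. Without an explicit invariant that must flip at each step, ``chase relations around and use oddness'' is a hope, not an argument, and it is not clear it can be made to work cleanly. The missing idea is precisely the localization via the $AB^jC$ string claim, which removes the need for any global traversal.
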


Before presenting the proof for this lemma, we prove a seemingly unrelated
result that will be used in the proof of the lemma. 
Let~$s$ be a string on a 3-letter alphabet. 
We say that~$s$ is \defi{special} if it has the form $AB^jC$ for some $j \geq
1$, where $A$, $B$, and $C$ are the three letters of the alphabet. 

\begin{claim2}\label{claim:ABC}
  Let~$s$ be a string on a 3-letter alphabet.
  If~$s$ contains the three letters of the alphabet, then~$s$ contains a
  special string as a substring. 
\end{claim2}
\begin{proof}
  The proof is by induction on the length of~$s$.
  If~$s$ has length~3, as~$s$ has the three letters of the alphabet, then~$s$
  is itself a special string with $j=1$.
  Otherwise, without loss of generality, suppose that~$s$ starts with the
  letter~$A$, and that~$B$ is the second letter different from~$A$ appearing
  in~$s$.  
  Call~$C$ the remaining letter of the alphabet. 
  If~$s$ starts with $A^iB^jC^k$, for $i,j,k \geq 1$, then we have the special
  substring~$AB^jC$ in~$s$ and we are done.
  Otherwise, $s$ starts with $A^iB^jA^k$, for $i,j,k \geq 1$. 
  Apply induction on the string~$s'$ obtained from~$s$ by removing the
  prefix~$A^i$.
  Note that~$s'$ still contains the three letters of the alphabet, as there is
  at least one~$A$ left in~$s'$ because $k \geq 1$. 
  Now, by induction, $s'$ contains a special substring, which is also a
  substring of~$s$, hence we are done. 
\hfill$\qed$
\end{proof}

\begin{proof}[of Lemma~\ref{lem:Ck}]
  We will prove that there is no proper 3-coloring for $F_2(M(C_k))$. 
  Let $C_k = (v_1,\ldots,v_k)$ and consider $u_0,u_1,\ldots,u_k$ as in the
  construction of~$M(C_k)$. 
  For a contradiction, suppose there is such a 3-coloring with colors 1, 2, 3.  
  Let $I_j = \{v \colon \mbox{configuration $u_0v$ has color $j$}\}$ for~$j=1,2,3$.
  The sets $I_1,I_2,I_3$ form a partition of $V(M(C_k)-u_0)$, and each~$I_j$ is
  an independent set in $M(C_k)-u_0$.
  Because $\chi(M(C_k)) = 4$ and $u_1,\ldots,u_k$ are the neighbors of~$u_0$,
  there is at least one~$u_i$ in each~$I_j$. 
  Furthermore, the partition $\{I_1,I_2,I_3\}$ induces a proper 3-coloring
  of~$C_k$, and so there is also at least one~$v_i$ in each~$I_j$. 

  Let~$s$ be the length-$k$ string on the alphabet $\{1,2,3\}$ consisting of
  the sequence of indices~$j$ such that $u_i \in I_j$ for~$i=1,\ldots,k$.
  As each~$I_j$ contains some~$u_i$, the three indices $1,2,3$ appear in~$s$. 
  We apply Claim~\ref{claim:ABC} to~$s$, deriving, without loss of generality,
  that $u_1 \in I_1$, $u_2,\ldots,u_\ell \in I_2$, and $u_{\ell+1} \in I_3$,
  for $\ell \geq 2$.
  Now let us derive a contradiction by analyzing two cases. 

  If $\ell = 2$, then the situation is depicted in Figure~\ref{fig:MCk}(a).
  Because~$u_0$ is adjacent to $u_1,u_2,u_3$, the configuration~$u_1u_2$ must
  have color~3 and the configuration $u_2u_3$ must have color~1. 
  Because~$v_2$ is adjacent to~$u_1$ and~$u_3$, the configuration~$u_0v_2$ must
  have color~2, which means $v_2 \in I_2$.
  But then the configuration~$u_2v_2$ cannot be colored with any of the colors
  1, 2, 3. 

  If $\ell > 2$, then the situation is depicted in Figure~\ref{fig:MCk}(b).
  Because~$v_2$ is adjacent to~$u_1$ and~$u_3$, the configuration~$u_0v_2$ must
  have color~3, which means $v_2 \in I_3$.
  Because~$u_0$ is adjacent to~$u_3$ and~$u_{\ell+1}$, the
  configuration~$u_3u_{\ell+1}$ must be of color~1, and the
  configuration~$u_{\ell+1}v_2$ must be of color~2. 
  But then the configuration $u_1u_{\ell+1}$ cannot be colored with any of the
  colors 1, 2, 3. 
\hfill$\qed$
\end{proof}

  \begin{figure}[h!]
    \centering
    \begin{minipage}{0.5\textwidth}
        \centering
        (a) \includegraphics[width=0.7\textwidth]{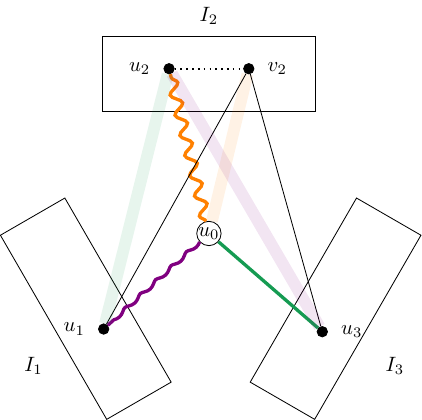}
    \end{minipage}\hfill
    \begin{minipage}{0.5\textwidth}
        \centering
        (b) \includegraphics[width=0.7\textwidth]{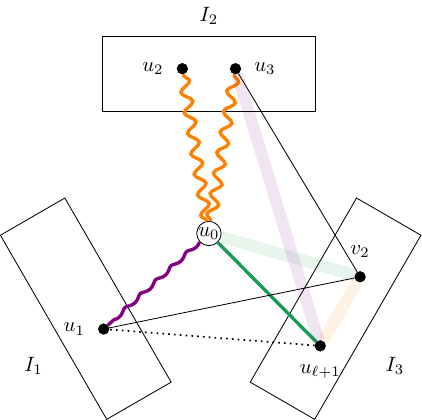} 
    \end{minipage}
    \caption{Cases for the proof of Lemma~\ref{lem:Ck}.} 
    \label{fig:MCk}
  \end{figure}

Let~$D$ be a strongly connected digraph of order~$n$.
We know that~$F_k(D)$ is also strongly connected and that $\dichi(F_1(D)) =
\dichi(D)$.  
One might ask whether $\dichi(F_k(D)) \leq \dichi(F_{k-1}(D))$ for $k > 1$. 
However this does not always hold.
For instance, for all $n \equiv 4 \pmod{6}$,
we have~$\chi(F_2(K_n)) = n-1$ and $\chi(F_3(K_n)) = n$~\cite{EtzionB1996}.
It is worth mentioning that we have found only one example of graph~$G$
different from the complete graph, such that $\chi(F_k(G)) < \chi(F_{k-1}(G))$
for some $1 < k \leq n/2$.
This graph, a $6$-critical one, can be obtained from the complete graph on~$8$
vertices by deleting a cycle of length~$5$.
For this specific graph, we have $\chi(F_k(G)) = 6$ for all $1 \leq k \leq 3$,
and $\chi(F_4(G)) = 5$.
For all other graphs that we considered, we observed $\chi(F_k(G)) =
\chi(F_{k-1}(G))$ for all $1 < k \leq n/2$.

\section{Final remarks}\label{sec:final}

In this paper we introduced a generalization of token graphs considering
digraphs. 
We prove several results relating properties of the digraph to properties of
its token digraphs.
This study is interesting on its own, but can also share light to the
undirected case.

We found particularly challenging the study of acyclic partitions and the
dichromatic number of token digraphs.
Our study led us to interesting questions not only for the directed case, but
also for the undirected case, including a conjecture stated in
Section~\ref{sec:acyclic_partition}.

\section*{Acknowledgements}

This work was initiated during the ChiPaGra workshop 2022 in Nazaré Paulista,
São Paulo, Brazil: we thank the organizers and the attendees for a great
atmosphere.
This work is supported by the following grants and projects: 
\begin{inparaenum}[(i)]
\item FAPESP-ANID Investigación Conjunta grant 2019/13364-7.
\item CGF and CNL were supported by Coordenação de Aperfeiçoamento de Pessoal
de Nível Superior -- Brasil -- CAPES -- Finance Code 001, and by the Conselho
Nacional de Desenvolvimento Científico e Tecnológico (CNPq) of Brazil grant
404315/2023-2.
\item CGF was supported by CNPq (grants 310979/2020-0 and 423833/2018-9).
\item CNL was supported by CNPq (grant 312026/2021-8) and by L'ORÉAL-UNESCO-ABC For Women In Science.
\item ATN was supported by ANID/Fondecyt Postdoctorado 3220838, and by ANID Basal Grant CMM FB210005. 
\item GS was supported by ANID Becas/Doctorado Nacional 21221049.
\item JPP was supported by ANID Becas/Doctorado Nacional 21211955.
\end{inparaenum}

\bibliographystyle{splncs04}
\bibliography{token_graphs}
    
\end{document}